\newcommand{\ds}{\displaystyle}
\newcommand{\reals}{\mathbb{R}}
\newcommand{\realstwo}{\mathbb{R}^2}
\newcommand{\realsthree}{\mathbb{R}^3}
\newcommand{\xb}{{\bf{x}}}
\newcommand{\Dx}{{\partial_x}}
\newcommand{\pd}{{\partial}}
\newcommand{\cA}{{\mathscr{A}}}
\newcommand{\Dn}{\partial_{\nu}}
\newcommand{\cE}{{\mathcal{E}}}
\newcommand{\cD}{\mathscr{D}}
\newcommand{\bA}{\mathbb{A}}
\newcommand{\bP}{\mathbb{P}}
\newcommand{\bX}{\mathbb{X}}
\newcommand{\cF}{\mathscr{F}}
\newcommand{\R}{\mathbb{R}}
\newcommand{\cO}{\mathcal{O}}
\newcommand{\cU}{\mathcal{U}}
\newcommand{\Om}{{\Omega}}
\newcommand{\om}{{\omega}}
\newcommand{\al}{{\alpha}}
\newcommand{\ga}{{\gamma}}
\newcommand{\la}{{\lambda}}
\theoremstyle{plain}
\newtheorem{theorem}{Theorem}[section]
\newtheorem{lemma}[theorem]{Lemma}
\newtheorem{proposition}[theorem]{Proposition}
\newtheorem{assumption}{Assumption}[section]
\theoremstyle{remark}
\newtheorem{remark}{Remark}[section]
\numberwithin{equation}{section}
\numberwithin{theorem}{section}
\numberwithin{remark}{section}
\title{Evolution Semigroups  in Supersonic Flow-Plate Interactions}
\date{\today}
 \author{\begin{tabular}[t]{c@{\extracolsep{1em}}c@{\extracolsep{1em}}c}
           Igor Chueshov  & Irena Lasiecka & Justin T. Webster \\
\it        Kharkov National Univ. & \it Univ. of Virginia & \it Univ. of Virginia \\
\it        Kharkov, Ukraine & \it Charlottesville, VA &\it Charlotesville, VA\\
\it       chueshov@univer.kharkov.ua& \it il2v@virginia.edu & \it jtw3k@virginia.edu
\end{tabular}}
\begin{document}
\maketitle

\begin{abstract} {\noindent We consider the well-posedness of a model for a
 flow-structure interaction. This model describes the dynamics of
 an  elastic flexible plate  with clamped boundary conditions immersed in a supersonic flow. A perturbed wave equation describes the flow potential. The plate's out-of-plane displacement can be modeled by various nonlinear plate equations (including von Karman and Berger). We show that the linearized model is well-posed on the state space (as given by finite energy considerations) and generates a strongly continuous semigroup.
 We make use of these  results  to conclude global-in-time well-posedness for the fully nonlinear model.
\par
The proof of generation has two novel features, namely: (1) we
introduce a new flow potential velocity-type variable which makes it possible to cover
both subsonic and supersonic cases, and to
split the dynamics generating operator into a skew-adjoint component and a perturbation acting outside of the state space. Performing semigroup analysis also requires a nontrivial approximation of the domain of the generator. And (2) we make critical use of hidden regularity for
 the flow component of the model (in the abstract setup for the semigroup problem) which allows us run a fixed point argument and eventually conclude well-posedness. This well-posedness result for supersonic flows (in the absence of rotational inertia) has been hereto open. The use of semigroup methods to obtain well-posedness opens this model to long-time behavior considerations.  
\smallskip\par
\noindent {\bf Key terms:} flow-structure interaction, nonlinear plate,
 supersonic and subsonic flows,
 nonlinear semigroups, well-posedness, dynamical systems.
 \smallskip\par
\noindent {\bf MSC 2010:} 35L20, 74F10,   35Q74, 76J20
 
 }
\end{abstract}

\section{ Introduction}
\subsection{Physical Motivation}
The interaction of a thin, flexible structure with a surrounding flow of gas is one of the principal problems in aeroelasticity. Models of this type arise in many engineering applications such as studies of bridges and buildings in response to wind, snoring and sleep apnea in the human palate, and in the stability and control of wings and aircraft structures \cite{bal0,BA62,dowell1,HP02,Li03}.
 In general, for an abstract setup, we aim to model the oscillations of a thin flexible structure interacting with an inviscid potential flow in which it is immersed.  These models accommodate certain physical parameters, but one of the key parameters is the flow velocity of the unperturbed flow of gas.

Specifically, we deal with  a common flow-structure  PDE model which describes the interactive dynamics between a (nonlinear) plate and the surrounding potential flow (see, e.g., \cite{bolotin,dowellnon}). This model is one of the standard models in the applied mathematics  literature for the modeling of flow-structure
interactions (see, e.g., \cite{bolotin,dowellnon} and also \cite{dowell,dowell1} and the references therein).

The main goal of this paper is to present Hadamard well-posedness results  for the model in the presence of  \textit{supersonic flow velocities}.  While   {\it subsonic} flows
have received recent attention which has resulted in a  rather complete  mathematical theory of well-posedness  \cite{b-c,b-c-1,springer,jadea12,webster} and spectral behavior for reduced (linear) models \cite{bal3,shubov3}, this is not the case for  the
{\it supersonic flow velocities}\footnote{We exclude the models which are based
on the so-called ``piston" theory, see \cite[Chapter 4]{bolotin}, \cite[Part I]{dowellnon},
and also Remark~6.2.2 in \cite{springer} for a recent discussion.
}. The mathematical  difficulty in going from  a subsonic to supersonic regimes is apparent
when one inspects the {\it formal} energy balance. There is an apparent {\it loss of  ellipticity}  affecting  the static problem.  This, in turn, leads to  the appearance of boundary trace terms that  can not be handled  by
known (elliptic) PDE-trace theories.  Successful handling  of this issue yields  new methodology
which   is based on  appropriate (microlocal)  boundary trace  estimates and effectively
compensates for this  loss of ellipticity.
 The  method here presented  additionally covers (with minimal adjustments)  subsonic flows.

Thus, with respect to the supersonic model, this  paper addresses  the open question of well-posedness of {\it  finite energy solutions}, which is the most fundamental for future studies.
  Well-posedness results are necessary mathematically in order to begin long-time behavior and control studies of the model, which belong to the most interesting and pertinent mathematical studies in application for PDE models. Having shown well-posedness allows us to move into stability studies in the presence of control mechanisms \cite{cbms,redbook}.

\subsection{Notation}
For the remainder of the text we write $\xb$ for $(x,y,z) \in \realsthree_+$ or $(x,y) \in \Omega \subset \realstwo_{\{(x,y)\}}$, as dictated by context. Norms $||\cdot||$ are taken to be $L_2(D)$ for the domain dictated by context. Inner products in $L_2(\realsthree_+)$ are written $(\cdot,\cdot)$, while inner products in $L_2(\R^2\equiv\pd\R^3_+)$ are written $<\cdot,\cdot>$. Also, $ H^s(D)$ will denote the Sobolev space of order $s$, defined on a domain $D$, and $H^s_0(D)$ denotes the closure of $C_0^{\infty}(D)$ in the $H^s(D)$ norm
which we denote by $\|\cdot\|_{H^s(D)}$ or $\|\cdot\|_{s,D}$. We make use of the standard notation for the trace of functions defined on $\realsthree_+$, i.e. for $\phi \in H^1(\realsthree_+)$, $\gamma[\phi]=\phi \big|_{z=0}$ is the trace of $\phi$ on the plane $\{\xb:z=0\}$.

\subsection{PDE Description of the Model}
The model in consideration describes the interaction between a nonlinear plate with a field or flow of gas above it. To describe the behavior of the gas we make use of the theory of potential flows (see, e.g., \cite{bolotin,dowell} and the references therein) which produce a perturbed wave equation for the velocity potential of the flow. The oscillatory behavior of the plate is governed by the second order (in time) Kirchoff plate equation with a general nonlinearity.
  We will consider certain `physical' nonlinearities which are used in the modeling of the large oscillations of thin, flexible plates -  so-called \textit{large deflection theory}.
\par
The environment we consider is $\realsthree_+=\{(x,y,z): z \ge 0\}$. The plate
 is modeled by  a bounded domain $\Omega \subset \reals^2_{\{(x,y)\}}=\{(x,y,z): z = 0\}$ with smooth boundary $\partial \Omega = \Gamma$.
 The plate is  embedded  in  a `large' rigid body (producing the  so-called \textit{clamped} boundary conditions) immersed in an inviscid  flow (over body) with velocity $U \neq 1$ in the  negative $x$-direction\footnote{Here we normalize $U=1$ to be Mach 1, i.e. $0 \le U <1$ is subsonic and $U>1$ is supersonic.}. This situation corresponds to the dynamics of a panel element of an aircraft flying with the speed $U$, see, e.g., \cite{dowell1}.
\par
The scalar function $u: \Om\times \R_+ \to \reals$ represents the vertical displacement of the plate in the $z$-direction at the point $(x;y)$ at the moment $t$. We take the nonlinear Kirchoff type plate with clamped boundary conditions\footnote{While being the most physically relevant boundary conditions for the flow-plate model, clamped boundary conditions allow us to avoid certain technical issues in the consideration and streamline our exposition. Other possible and physically pertinent plate boundary conditions in this setup include: hinged, hinged dissipation, and combinations thereof \cite{lagnese}. }:

\begin{equation}\label{plate}\begin{cases}
u_{tt}+\Delta^2u+f(u)= p(\xb,t) & \text { in } \Om\times (0,T),\\
u(0)=u_0;~~u_t(0)=u_1,\\
u=\Dn u = 0 & \text{ on } \pd\Om\times (0,T).
\end{cases}
\end{equation}
The aerodynamical  pressure  $p(\xb,t)$  represents the coupling with the flow
and will be given below.
\par
In this paper we consider a general situation that
covers   typical nonlinear (cubic-type) force terms $f(u)$ resulting from  aeroelasticity  modeling \cite{bolotin,dowell,dowell1,HolMar78}.
These include:
\begin{assumption}\label{as}{\rm
\begin{enumerate}
  \item {\sl Kirchhoff model}: $u\mapsto f(u)$ is the Nemytski operator
with a function $f\in {\rm Lip_{loc}}(\R)$ which fulfills the  condition
\begin{equation}
    \label{phi_condition}
    \underset{|s|\to\infty}{\liminf}{\frac{f(s)}{s}}>-\la_1,
\end{equation}  where $\la_1$ is the first eigenvalue of the
biharmonic operator with homogeneous Dirichlet boundary conditions.
  \item {\sl Von Karman model:} $f(u)=-[u, v(u)+F_0]$, where $F_0$ is a given function
  from $H^4(\Om)$ and
the von Karman bracket $[u,v]$  is given by
\begin{equation*}
[u,v] = \partial ^{2}_{x} u\cdot \partial ^{2}_y v +
\partial ^{2}_y u\cdot \partial ^{2}_{x} v -
2\cdot \partial ^{2}_{xy} u\cdot \partial ^{2}_{xy}
v,
\end{equation*} and
the Airy stress function $v(u) $ solves the following  elliptic
problem
\begin{equation}\label{airy-1}
\Delta^2 v(u)+[u,u] =0 ~~{\rm in}~~  \Omega,\quad \Dn v(u) = v(u) =0 ~~{\rm on}~~  \pd\Om.
\end{equation}
Von  Karman equations are well known in nonlinear elasticity and
constitute a basic model describing nonlinear oscillations of a
plate accounting for  large displacements, see \cite{karman}
and also \cite{springer,ciarlet}  and
references therein.
  \item {\sl Berger Model:} In this case the feedback force $f$ has the form
  $$
  f(u)=\left[ \kappa \int_\Om |\nabla u|^2 dx-\Gamma\right] \Delta u,
$$
where $\kappa>0$ and $\Gamma\in\R$ are parameters, for some details and  references see \cite{berger0} and also
\cite[Chap.4]{Chueshov}.
\end{enumerate}
}
\end{assumption}
\par
For the flow component of the model, we make use of linearized
 potential theory, and we know \cite{BA62,bolotin,dowell1} that the (perturbed) flow potential $\phi:\realsthree_+ \rightarrow \reals$ must satisfy the perturbed wave equation below (note that when $U=0$ this is the standard wave equation):
\begin{equation}\label{flow}\begin{cases}
(\partial_t+U\partial_x)^2\phi=\Delta \phi & \text { in } \realsthree_+ \times (0,T),\\
\phi(0)=\phi_0;~~\phi_t(0)=\phi_1,\\
\Dn \phi = d(\xb,t)& \text{ on } \realstwo_{\{(x,y)\}} \times (0,T).
\end{cases}
\end{equation}
The strong coupling here takes place in the  downwash term of the flow potential (the Neumann boundary condition) by taking $$d(\xb,t)=-\big[(\partial_t+U\partial_x)u (\xb)\big]\cdot \mathbf{1}_{\Omega}(\xb)$$ and by taking the aerodynamical pressure
of the form
\begin{equation}\label{aero-dyn-pr}
p(\xb,t)=\big(\partial_t+U\partial_x\big)\gamma[\phi]
\end{equation}
 in (\ref{plate}) above. This gives the fully coupled model:
\begin{equation}\label{flowplate}\begin{cases}
u_{tt}+\Delta^2u+f(u)= \big(\partial_t+U\partial_x\big)\gamma[\phi] & \text { in } \Om\times (0,T),\\
u(0)=u_0;~~u_t(0)=u_1,\\
u=\Dn u = 0 & \text{ on } \pd\Om\times (0,T),\\
(\partial_t+U\partial_x)^2\phi=\Delta \phi & \text { in } \realsthree_+ \times (0,T),\\
\phi(0)=\phi_0;~~\phi_t(0)=\phi_1,\\
\Dn \phi = -\big[(\partial_t+U\partial_x)u (\xb)\big]\cdot \mathbf{1}_{\Omega}(\xb) & \text{ on } \realstwo_{\{(x,y)\}} \times (0,T).
\end{cases}
\end{equation}

\subsubsection{Parameters and New Challenges}
We do not include the full rotational inertia term $M_{\alpha}=(1-\al\Delta) u_{tt}$ in the LHS of plate equation, i.e., we take $\al =0$. In some considerations (see \cite{lagnese}) this term is taken to be proportional to the cube of the thickness of the plate, however, it is often neglected in large deflection theory. From the mathematical point of view, this term is regularizing in that it provides additional smoothness for the plate velocity $u_t$, i.e. $L_2(\Omega) \to H^1(\Omega)$. This is a key mathematical assumption in our analysis which separates it from previous supersonic considerations and increases the difficulty of the analysis. This will be further elaborated upon in the discussion of previous literature below. The case $\alpha > 0$  presents modeling difficulties in problems with flow coupling interface, but is often considered as a preliminary step in the  study limiting problems as $\alpha \searrow 0$
(see \cite{LBC96,b-c-1}, \cite{webster&lasiecka}, and also \cite{springer}) where subsonic regimes were studied.
On the other hand, the case when rotational terms are not included ($\alpha =0$)  leads to substantial new mathematical difficulties due to the presence of flow trace terms interacting with the plate.

The second key parameter is the unperturbed flow velocity $U$. Here we take $U\neq 1$
arbitrary. However, the supersonic case ($U>1$) is  the most interesting case from the point of view of application and engineering. Results in this case can be more challenging, due to the loss of strong ellipticity of the spatial flow operator in (\ref{flow}). For the subsonic case ($0\le U<1$) there are other methods available, see,
e.g., \cite{b-c,springer,jadea12,webster}. However, for  the \textit{non-rotational} case $\alpha =0 $ in the {\it supersonic regime}
$ U > 1 $, the problem of well-posedness of finite energy solutions is challenging and   {\it has been hereto open}.   As  is later expounded upon, the lack of  sufficient differentiability  and  compactness  for the plate velocity component $u_t$   renders the existing methods
(see \cite[Sections 6.5 and 6.6]{springer}, for instance)
 inapplicable.
 \par
 The aim of this paper is to provide an affirmative  answer to the well-posedness question in the (mathematically) most demanding case with $\alpha =0 $ and $ U > 1 $. In fact, we will show that the resulting dynamics generate a  {\it nonlinear semigroup associated with mild solutions}.
Though in  this treatment we focus  on  the most challenging case: $\alpha =0$ and $U > 1$, the new  methods developed apply to the full range $ U \ne 1$.
\subsection{Energies and State Space}
In the subsonic case $0 \le U <1$ energies can be derived by applying standard multipliers $u_t$ and $\phi_t$ along with boundary conditions to obtain the energy relations for the plate and the flow. This procedure leads to the energy which is bounded from below in the subsonic case.
However, it is apparent in the {\em supersonic} case that we will obtain an unbounded (from below) energy of the flow. Hence, we instead make use of the flow acceleration  multiplier $(\partial_t+U\partial_x)\phi \equiv \psi$. Our so-called change of variable is then $\phi_t \rightarrow (\phi_t+U\phi_x) = \psi$.
Thus for the flow dynamics, instead of $(\phi;\phi_t)$ we introduce
the phase variables  $(\phi;\psi)$.
\par
We then have a new  description of our  coupled system as follows:
\begin{equation}\label{sys1}\begin{cases}

(\partial_t+U\partial_x)\phi =\psi& \text{ in } \realsthree_+ \times(0,T),
\\
(\partial_t+U\partial_x)\psi=\Delta \phi&\text{ in } \realsthree_+\times (0,T),
\\
\Dn \phi = -\big[(\partial_t+U\partial_x)u (\xb)\big]\cdot \mathbf{1}_{\Omega}(\xb) & \text{ on }
 \realstwo_{\{(x,y)\}} \times (0,T),
\\  u_{tt}+\Delta^2u+f(u)=\gamma[\psi]&\text{ in } \Om\times (0,T),\\
u=\Dn u = 0 & \text{ on } \pd\Om\times (0,T).\\
\end{cases}
\end{equation}
This leads to the following (formal) energies, arrived at via Green's Theorem:
\begin{align}\label{energies}
E_{pl}(t) =& \dfrac{1}{2}\big(||u_t||^2+||\Delta u||^2\big) +\Pi(u),\\
E_{fl}(t) = & \dfrac{1}{2}\big(||\psi||^2+||\nabla \phi||^2\big),
\notag
\\
\cE(t) = & E_{pl}(t)+E_{fl}(t),\notag
\end{align}
where $\Pi(u)$ is a potential of the nonlinear force $f(u)$, i.e. we assume that
$f(u)$ is a Fr\'echet derivative of $\Pi(u)$,
$f(u)=\Pi'(u)$. Hypotheses concerning $\Pi(u)$ are motivated by
the examples described in Assumption~\ref{as}  and will be given later (see the statement
of Theorem~\ref{abstract}).
\par
With these energies, we have the formal energy relation\footnote{ For some details
in the rotational inertia case we refer to \cite{b-c-1}, see also the proof
of relation (6.6.4) in
\cite[Section 6.6]{springer}.
}

\begin{equation}\label{energyrelation}
\cE(t)+ U\int_0^t <u_x,\gamma[\psi]> dt = \cE(0). \end{equation}
This energy relation provides the first motivation for viewing the dynamics (under our change of phase variable) as comprised of a generating piece and a perturbation.
\par
Finite energy constraints manifest themselves in the natural requirements on the functions $\phi$ and $u$:
\begin{equation}\label{flowreq}
\phi(\xb,t) \in C(0,T; H^1(\realsthree_+))\cap C^1(0,T;L_2(\realsthree_+)), \end{equation} \begin{equation}\label{platereq}
u(\xb,t) \in C(0,T; H_0^2(\Omega))\cap C^1(0,T;L_2(\Omega)).\end{equation}
In working with well-posedness considerations (and thus dynamical systems), the above finite energy constraints lead to the  so-called finite energy space, which we will take as our state space: \begin{equation}\label{space-Y}
Y = Y_{fl} \times Y_{pl} \equiv \big(H^1(\realsthree_+)\times L_2(\realsthree_+)\big) \times \big(H_0^2(\Omega) \times L_2(\Omega)\big).
\end{equation}
\begin{remark}
The energy $E_{fl}(t)$ defined  above  coincides with
 the  energy $E^{(2)}_{fl}(t)$ introduced in \cite{LBC96,b-c-1}, see also \cite[Sect.6.6]{springer}. As previously indicated, in the subsonic case, the standard flow multiplier $\phi_t$ is used in the analysis, rather than $\psi = \phi_t + U\phi_x$; this produces differing flow energies and an interactive  term  $E_{int}(t)$, which does not appear here (see, e.g., \cite{b-c,springer,jadea12,webster}).  More specifically,  the flow component of the energy in this case is given by
 $$E^{(1)}_{fl}(t) \equiv \frac{1}{2} \left( ||\phi_t||^2 + ||\nabla \phi||^2 - U^2 ||\partial_x \phi||^2 \right) $$
 and the  interactive energy $E_{int} = U <\gamma[\phi], \partial_x u > $.
 The total energy  defined as a sum of the three components $\cE(t) =E^{(1)}_{fl}(t) + E_{pl}(t) + E_{int}(t)  $ satisfies
 $\cE(t) = \cE(s) $.
 We note, that in the {\it supersonic}   case   the flow part of the energy $E^{(1)}_{fl}(t) $ is no longer nonnegative.
 This, being  the source of major mathematical difficulties, necessitates  a   different approach.
 In fact, the new representation of the energies as in (\ref{energies}) provides good topological measure for the sought after solution, however the energy balance is {\it lost} in (\ref{energyrelation}) and, in addition, the boundary term is
 ``leaking energy" and  involves the traces of $ L_2$ solutions, which are possibly {\it not defined}  at all.

  In view of the above,  our  strategy will be based on  (i)  developing    theory for the traces of the flow solutions; (ii)  counteracting  the loss of energy balance relation. The first task will be accomplished by exploiting
      {\it sharp}
  trace regularity  in hyperbolic Neumann  solutions  (see \cite{l-t-sharp,miyatake1973,sakamoto,tataru} for related results). The second task will benefit critically from the presence of the  nonlinearity.
\end{remark}

\subsection{Definitions of Solutions}\label{solutions}
In the discussion below, we will encounter strong (classical), generalized (mild), and weak (variational) solutions.
In our analysis we will be making use of semigroup theory, hence we will work with \textit{generalized} solutions; these are strong limits of strong solutions. These solutions  satisfy an integral formulation of (\ref{flowplate}), and are called \textit{mild} by some authors. In our treatment, we will produce a unique generalized solution, and this, in turn, produces a unique weak solution, see, e.g., \cite[Section 6.5.5]{springer} and \cite{webster}.

We now define strong and generalized  solutions:
\vskip.25cm
\noindent\textbf{Strong Solutions}.
A pair of functions $\big(\phi(x,y,z;t);u(x,y;t)\big)$
satisfying
\eqref{flowreq}
   and \eqref{platereq}  is said to be a strong solution to (\ref{flowplate}) on $[0,T]$ if
\begin{itemize}
\item $(\phi_t;u_t) \in L^1(a,b; H^1(\realsthree_+)\times H_0^2(\Omega))$ and $(\phi_{tt};u_{tt}) \in L^1(a,b; L_2(\realsthree_+)\times L_2(\Omega))$
    for any $[a,b] \subset (0,T)$.
\item  $\Delta^2 u(t)  - U  \gamma[\Dx \phi(t) ]\in
L_2(\Omega)$ (thus $u(t) \in H^{7/2}(\Omega)\cap H_0^2(\Om)$) and
the equation $ u_{tt}+\Delta^2 u +f(u)=p(\xb,t)$ holds in $H^{-1/2}(\Omega)$ for $ t \in(0,T)$ with $p$ given by \eqref{aero-dyn-pr}.
\item  $ (U^2-1) \partial_x^2 \phi (t) - (\partial_y^2 + \partial_z^2) \phi(t)  \in
L_2(\R^3_{+}) $  with boundary conditions $\Dn \phi(t) \in H^1(\R^2) $ for all $ t \in(0,T)$ and
satisfying the relation
$\Dn \phi = -\big[(\partial_t+U\partial_x)u (\xb)\big]\cdot \mathbf{1}_{\Omega}(\xb)$  on
$\realstwo \times (0,T)$.
Moreover
 $(\partial_t + U\partial_x)^2\phi=\Delta \phi$ holds  for almost all  $ t \in(0,T)$ and $(x,y,z) \in \realsthree_+$.
\item The initial conditions are satisfied: $\phi(0)=\phi_0, ~~\phi_t(0)=\phi_1, ~~u(0)=u_0, ~~u_t(0)=u_1.$
\end{itemize}
\begin{remark}\label{se-str-sol}
The smoothness properties in the definition above are motivated by
the description of the generator of the linear problem
in the {\em supersonic} case $U>1$
 which is given  below, see relation \eqref{dom-bA} and Lemma~\ref{domain}. In the subsonic case regular solutions display more regularity  (see, e.g., \cite[Secions 6.4 and 6.5]{springer} and \cite{jadea12}).
The above analysis also reveals that  the degraded  differentiability of strong solutions is due to the  the loss of elipticity in the supersonic regime  and non-Lopatinski character of the boundary conditions.
\end{remark}

As stated above, generalized solutions are strong limits of strong solutions; these solutions  correspond to semigroup solutions for an initial datum outside of the domain of the generator.\vskip.25cm
\noindent\textbf{Generalized Solutions}.
A pair of functions $\big(\phi(x,y,z;t);u(x,y;t)\big)$ is said to be a generalized solution to  problem (\ref{flowplate}) on the interval $[0,T]$ if (\ref{platereq}) and (\ref{flowreq}) are satisfied and there exists a sequence of strong solutions $(\phi_n(t);u_n(t))$ with some initial data $(\phi^n_0,\phi^n_1; u^n_0; u^n_1)$ such that
$$\lim_{n\to \infty} \max_{t \in [0,T]} \Big\{||\partial_t\phi-\partial_t \phi_n(t)||_{L_2(\realsthree_+)}+||\phi(t)-\phi_n(t)||_{H^1(\realsthree_+)}\Big\}=0$$ and
$$\lim_{n \to \infty} \max_{t \in [0,T]} \Big\{||\partial_t u(t)-\partial_t u_n(t)||_{L_2(\Omega)} + ||u(t) - u_n(t)||_{H_0^2(\Omega)}\Big\}=0.$$

We can show that in the case when the nonlinear term  $f$ is locally Lipschitz from $H^2_0(\Omega)$ into $L_2(\Om)$
 the generalized solutions are in fact {\em weak solutions}, i.e.,
they satisfy the corresponding variational forms
(see Definition 6.4.3 in \cite[Chapter 6]{springer}).
 This can be  verified for strong solutions by straightforward integration with the use of regularity exhibited by strong solutions. Using the (strong) limit definition of generalized solutions,
  we can pass in the limit
  and show that the generalized solution satisfies the weak formulation of (\ref{flowplate}). This weak solution is in fact unique - for the proof, we defer to the method presented in \cite[Chapter 6]{springer}.

\subsection{Description  of Past Results}

Flow-structure models have  attracted considerable attention in the past mathematical literature, see, e.g., \cite{bal4,bal3,b-c, LBC96,b-c-1,chuey,springer,jadea12,dowellnon,webster&lasiecka,ACC,ryz,ryz2,shubov3,shubov1,webster}
and the references therein.
However, the vast majority of the work done have been devoted to numerical and experimental studies, see,
e..g.,
\cite{bal0,BA62,bolotin,dowell,dowell1,dowellnon,HP02} and also the survey~\cite{Li03}
and the literature cited there.  Much of the studies has been based on linear one-dimensional-special geometries plate models  where  the  goal was to determine the speed at which flutter occurs, see
\cite{bal0,BA62,bolotin,dowell1,HP02,Li03} for instance.
 More recently the study of linear models with a one dimensional structure (beam)
and Kutta-Jukovsky  boundary conditions  found renewed interest and
have been extensively  pursued in \cite{bal4,shubov3,shubov2,shubov1}.  This line of work has focused on spectral properties of the system, with particular emphasis on identifying aeroelastic eigenmodes corresponding to  the associated  Possio integral  equation.
\par
In contrast, our interest here concerns PDE aspects of the problem, including the fundamental issue of well-posedness
of {\it  finite energy}  solutions corresponding to nonlinear flow-plate interaction  in the principal case for the parameters $\alpha$ and $U$ with clamped plate boundary conditions.
\par
In all parameter cases, one is faced with low regularity of boundary traces
due to the failure of Lopatinski conditions (unlike the Dirichlet case \cite{sakamoto}, where there is no loss of regularity to  wave  solutions in their boundary traces).
In fact, the first contribution to the mathematical analysis of the problem  is \cite{LBC96,b-c-1} (see also \cite[Section 6.6]{springer}), where the case  $\alpha > 0 $ is fully treated.  The method employed in \cite{LBC96,b-c-1,springer}
relies on the following main ingredients:
(1) sharp microlocal estimates for the solution to the wave equation driven by $H^{1/2}(\Omega) $ Neumann  boundary data given by $u_t + U u_x$. This gives $\phi_t|_{\Omega} \in
L_2(0,T; H^{-1/2}(\Omega)$  \cite{miyatake1973} (in fact more regularity is presently known:
$H^{-1/3}(\Omega) $ \cite{l-t-sharp,tataru});  and (2) the regularizing effects on the velocity $u_t$ (i.e. $u_t \in H^1(\Omega)$) rather than just $L_2(\Omega)$), when $\alpha >0$.
 The above ingredients, along with an explicit solver for the 3-dimensional wave equation and a Galerkin approximation for the structure  allows one to construct   a fixed point for the appropriate
 solution map. The method  works well in both cases $0<U<1$ and  $U > 1 $.  Similar ideas were used  more recently \cite{ryz,ryz2} in  the case when thermoelastic effects are added to the model; in this case the dynamics also exhibit
$H^1(\Omega)$ regularity of the velocity in both the rotational and non-rotational cases due to the analytic regularizing  effects induced by thermoelasticity \cite{redbook}. However, when
 $\alpha  =0$, and thermoelastic smoothing is not accounted for, there is no additional regularity of $u_t$ beyond $L_2(\Omega)$.  In that case the corresponding estimates
 become singular. This destroys the applicability of previous methods. In summary, much of the work on this problem to date has assumed  the presence of additional regularizing terms in the plate equation, or depends critically on the condition $U <1$. A recent book (\cite[Chapter 6]{springer}) provides an account of relevant results, including more recent applications of the compactness method in the case $\alpha=0$ and $0 \le U <1$. Existence of a nonlinear semigroup capturing finite energy solutions   has been shown in \cite{webster}, see also \cite{jadea12}.
\par
 In this treatment we take $\alpha=0$, corresponding to the more difficult non-rotational model, and we approach the problem with $0\le U\neq 1$ from the semigroup point of view - without any reliance on explicit solvers for the flow equation or Galerkin constructions. The advantage of this approach, in addition to solving the fundamental well-posedness question, is the potential for an array of important generalizations, including more general flow equations and more general nonlinearities appearing in the structure.
 Moreover, it may be possible to view the supersonic global solution we arrive at for $\alpha=0$ as the uniform limit of solutions as $\alpha \downarrow 0$ (as in the subsonic case \cite{webster&lasiecka}).

The  main  mathematical difficulty of the problem under  consideration
is  the presence of  the   boundary  terms:  $ (\phi_t +
 U \phi_x)|_{\Omega} $  acting as the aerodynamical pressure  in the model. When $U =0$, the corresponding  boundary terms exhibit monotone behavior with respect to   the energy  inner product
 (see \cite{bucciCL07}, \cite[Section 6.2]{springer} and \cite{cbms})
 which is topologically equivalent  to  the topology of the  space $Y$ given by \eqref{space-Y}.   The latter  enables  the use of monotone operator theory (\cite{bucciCL07}, \cite[Section 6.2]{springer} and \cite{cbms}).  However, when
  $U > 0$ this is no longer true   with respect to the  topology induced by the energy spaces.  The   lack of the natural dissipativity for both
interface traces, as well as the nonlinear term in the  plate equation,
make the  task of proving well-posedness  challenging. In the subsonic case, semigroup methods were applied to the problem by implementing certain bounded adjustments to the inner product structure of the state space which then produced shifted dissipativity \cite{webster}. This type of consideration is not possible here, owing to the degeneracy of the standard
 energy functional when $U \ge 1$.
\par
In contrast to these works, our method and results
{\it  do not depend on any smoothing mechanism} (as we take $\alpha =0$) and are applicable {\em for all} $U \ne 1$.
The key ingredients rely on the development of a suitable trace theory for the velocity of the flow and  implementing the corresponding  estimates with semigroup theory in extended spaces.
In this way obtained a-priori estimates allow for a construction of a nonlinear semigroup which evolves finite energy solutions.

\subsection{Statement of Main Results}
Recall, our state space for the analysis to follow is
$$
Y \equiv H^1(\R_+^3) \times L_2(\R_+^3) \times H_0^2(\Omega) \times L_2(\Omega).
$$
\begin{theorem}[{\bf Linear}]\label{th:lin}
Consider linear problem in (\ref{flowplate}) with $f(u) =0$.  Let $ T > 0 $. Then,   for every initial datum
$( \phi_0, \phi_1; u_0, u_1 ) \in Y $ there exists unique generalized solution
\begin{equation}\label{phi-u0reg}
(\phi (t), \phi_t(t); u(t), u_t(t)) \in C([0, T ],  Y)
\end{equation}
 which   defines  a $C_0$-semigroup
$T_t : Y \rightarrow Y $ associated with  (\ref{sys1}) (where $ f =0$).
\par
 For any initial data in
  \begin{equation}\label{Y1}
Y_1 \equiv \left\{ y= (\phi,\phi_1;u,u_1)  \in Y\;   \left| \begin{array}{l}
\phi_1 \in H^1(\R^3_{+} ),~~ u_1 \in H_0^2(\Omega),  ~\\ -U^2 \Dx^2 \phi  + \Delta \phi \in L_2(\R^3_{+}), \\
\Dn  \phi = - [u_1 +U \Dx u ]\cdot {\bf 1}_{\Omega} \in H^1(\R^2), \\
~
-\Delta^2 u + U\gamma[ \Dx \phi] \in L_2(\Omega) \end{array} \right. \right\}
\end{equation}
   the corresponding  solution is also strong.
\end{theorem}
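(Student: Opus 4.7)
The plan is to cast the linear system in first-order form on the state $y=(\phi,\psi;u,u_t)$ (where $\psi=\phi_t+U\Dx\phi$ as in \eqref{sys1}) and construct a $C_0$-semigroup $T_t$ on $Y$ by a perturbation-of-generator argument, with the perturbation treated outside the state topology via a hidden-regularity fixed point. Concretely, I would write $\bA=\bA_0+\bP$, where $\bA_0$ collects the pieces which are skew-symmetric in the $Y$-inner product: the coupling $\psi\mapsto \Delta\phi$ and $\phi\mapsto \psi$ in the flow, the Neumann match $\Dn\phi=-u_t\cdot\mathbf{1}_\Om$, the biharmonic $-\Delta^2 u$ in the plate, and the pairing of $\gamma[\psi]$ with $u_t$. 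The off-diagonal traces cancel via Green's identity, so $\bA_0$ with a suitable dense domain is skew-adjoint on $Y$ and generates a unitary group by Stone's theorem. The perturbation $\bP$ collects the $U\Dx$ transport terms: $-U\Dx\phi$ in the first flow equation (bounded $H^1\to L_2$, harmless), $-U\Dx\psi$ in the second (bounded $L_2\to H^{-1}$, so $\bP$ maps $Y$ into an extended space $\widetilde Y\supsetneq Y$), and crucially the $U\Dx u$ contribution to the Neumann datum and the $U\gamma[\Dx\phi]$ contribution to the plate source.

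Next I would set up the integral equation
\begin{equation*}
y(t)=e^{t\bA_0}y_0+\int_0^t e^{(t-s)\bA_0}\bP y(s)\,ds
\end{equation*}
and seek a fixed point in $C([0,T];Y)$ for $T$ small. The obstruction is that $\bP y(s)\notin Y$ for generic $y(s)\in Y$, because the plate source requires $\gamma[\Dx\phi]\big|_\Om$, a boundary trace of an $H^1$ function that is one derivative beyond the classical trace theorem. This is where the analysis hinges on the hidden regularity advertised in the abstract: for the Neumann wave flow generated by $\bA_0$ from data in $Y$ with plate input in $C([0,T];H^2_0(\Om)\times L_2(\Om))$, sharp microlocal trace results in the spirit of \cite{l-t-sharp,tataru,miyatake1973,sakamoto} give an \textit{a priori} bound of the form
\begin{equation*}
\int_0^T \|\gamma[\Dx\phi(s)]\|_{H^{-\epsilon}(\Om)}^2\,ds\le C(T)\,\|y_0\|_Y^2+C(T)\int_0^T \|y(s)\|_Y^2\,ds
\end{equation*}
for some $\epsilon>0$. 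Paired with the elliptic-regularity fact that $-\Delta^2$ with clamped conditions inverts $H^{-\epsilon}(\Om)\to H^{4-\epsilon}_0(\Om)\hookrightarrow H^2_0(\Om)$ (so the plate component absorbs the trace loss), this estimate makes the integral operator $y\mapsto \int_0^t e^{(t-s)\bA_0}\bP y(s)\,ds$ a contraction on $C([0,T];Y)$ for $T$ sufficiently small. Existence and uniqueness of a generalized solution follow, and by translation invariance this iterates to global $t\in[0,T]$ and defines the semigroup $T_t$.

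The main obstacle is indeed establishing the trace estimate above in the form I need. One cannot invoke it as a black-box result on the half-space Neumann wave, because the boundary datum $-(u_t+U\Dx u)\mathbf{1}_\Om$ sits in $L_2(0,T;H^{1/2}(\R^2))$ only after using $u\in H^2_0(\Om)$, so one must localize to $\Om$, track how the trace of $\Dx\phi$ picks up regularity from the interior, and ensure that the resulting estimate is compatible (in duality) with the plate equation. A further technical step is the \emph{approximation of the domain of the generator} mentioned in the abstract: since $Y_1$ involves the combined condition $-\Delta^2 u+U\gamma[\Dx\phi]\in L_2(\Om)$ rather than $\Delta^2 u\in L_2$ separately, one cannot approximate by smoothing $u$ and $\phi$ independently — a coupled mollification respecting the compatibility relation must be constructed. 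This approximation is also what I would use to identify $D(\bA)$ with $Y_1$ precisely and to upgrade generalized solutions starting in $Y_1$ to strong solutions in the sense of Section~\ref{solutions}.

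Step by step, then: (i) show $\bA_0$ is skew-adjoint on $Y$, so $e^{t\bA_0}$ is unitary; (ii) derive the hidden-regularity trace inequality for the flow component of $e^{t\bA_0}y_0$ with plate-Neumann data; (iii) run the Banach fixed point for the Duhamel equation on short time, and extend globally; (iv) verify that the constructed evolution is a $C_0$-semigroup $T_t$ on $Y$ (strong continuity from continuity of each summand); (v) compute the generator and show its domain equals $Y_1$ via the coupled approximation argument, whence initial data in $Y_1$ yield classical/strong solutions satisfying the pointwise relations in Section~\ref{solutions}.
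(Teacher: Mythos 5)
You have the right broad scaffolding (the change of variable $\psi=\phi_t+U\Dx\phi$, a skew-adjoint generator plus a perturbation living outside $Y$, hidden boundary regularity, a Duhamel fixed point, Ball-type identification of the generator), but several load-bearing steps are off. The most important is the decomposition. You place the bulk transport terms $-U\Dx\phi$ and $-U\Dx\psi$ into the perturbation $\bP$, but the paper keeps them inside the generator $\bA$ precisely because they are skew-symmetric in the $Y$-inner product on the half-space: $(A^{1/2}\Dx\phi,A^{1/2}\phi)=\tfrac12\int_{\R^3_+}\Dx|\nabla\phi|^2\,dx=0$ and $(\Dx\psi,\psi)=0$, with no boundary contribution in the $x$-direction. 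That is what makes $\bA$ skew-adjoint and lets the only genuine perturbation be $\bP_\#[u]=(0,-UAN\Dx u,0,0)$, a boundary-control operator with control space $\cU=H^2_0(\Omega)$. In your splitting, $\bP$ depends on all four state components and is a general unbounded operator $Y\to[\cD(\bA_0)]'$, not of boundary-control form, so the admissibility machinery of Theorem~\ref{dualityequiv} does not apply directly and the contraction estimate does not come for free. Moreover, once $\psi$ is the independent state variable, the plate source is exactly $\gamma[\psi]$; there is no residual ``$U\gamma[\Dx\phi]$ contribution to the plate source'' to move into $\bP$, which suggests you are mixing the $(\phi,\phi_t)$ and $(\phi,\psi)$ formulations.

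You have also misidentified which trace needs hidden regularity, and you omit the maximality argument. You state the key estimate for $\gamma[\Dx\phi]$, but $\gamma[\Dx\phi]=\Dx\gamma[\phi]\in H^{-1/2}(\R^2)$ already follows from the classical trace theorem since $\phi\in H^1(\R^3_+)$; no microlocal input is needed there. The genuinely nonclassical trace is $\gamma[\psi]$ with $\psi\in L_2(\R^3_+)$ only, and the paper's Lemma~\ref{le:FTR} proves $\gamma[\psi]\in L_2(0,T;H^{-1/2}(\R^2))$ by a Fourier--Laplace multiplier analysis on the half-space. This is what renders the duality pairing $<\Dx u,\gamma[\psi]>$ finite and makes $\bP_\#$ admissible; the elliptic gain of $\Delta^{-2}$ on the plate side (your ``absorbing the trace loss'') is not a substitute, since it does not give $\gamma[\psi]$ any meaning. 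Finally, skew-symmetry alone only gives dissipativity; to generate a group one needs maximality $\mathscr{R}(\la-\bA)=Y$, which here is nontrivial because the static supersonic operator $(U^2-1)\Dx^2-\Delta_{y,z}+\mu$ is not elliptic. The paper handles this by a domain approximation lemma (solving a regularized transport--wave equation by Fourier transform with an auxiliary $Ur\Dx$ term to make the symbol non-degenerate) followed by a Galerkin/Lax--Milgram argument on the bilinear form $a(V,\widetilde V)$. That step is absent from your proposal and cannot be replaced by Stone's theorem applied to ``the skew-symmetric part.''
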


\begin{theorem}[{\bf Nonlinear}]\label{th:nonlin}
Let $ T > 0 $ and let $f(u)$ be any nonlinear internal force (Kirchhoff, von Karman, or Berger) given by Assumption \ref{as}. Then,   for every initial data
$( \phi_0, \phi_1; u_0, u_1 ) \in Y $ there exists unique generalized solution
$(\phi, \phi_t; u, u_t)$ to (\ref{flowplate})
 possessing property \eqref{phi-u0reg}. This solution is also weak and generates
  a  nonlinear  continuous  semigroup
$S_t : Y \rightarrow Y$ associated with   (\ref{sys1}).
\par
If $( \phi_0, \phi_1; u_0, u_1 ) \in Y_1$, where
 $Y_1 \subset Y$ is given by \eqref{Y1}, then the corresponding solution is also strong.
\end{theorem}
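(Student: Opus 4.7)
The natural plan is to treat the nonlinear problem as a locally Lipschitz perturbation of the linear semigroup already produced by Theorem \ref{th:lin}, build local-in-time mild solutions via a Duhamel fixed-point argument in $Y$, and then promote them to global solutions via an a priori bound coming from the energy identity together with the structural assumptions on $f$. Writing the state as $y=(\phi,\psi;u,u_t)$ and letting $\bA$ denote the generator of the $C_0$-semigroup $T_t$ from Theorem \ref{th:lin}, the coupled system (\ref{sys1}) with nonlinear force takes the form
\begin{equation*}
y_t = \bA y + \cF(y),\qquad \cF(y)=(0,0;0,-f(u)),
\end{equation*}
and a generalized solution is defined as a continuous $Y$-valued fixed point of the variation of parameters formula $y(t)=T_t y_0 + \int_0^t T_{t-s}\cF(y(s))\,ds$.

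The first step is to check that $\cF:Y\to Y$ is Lipschitz on bounded sets. This is classical for each of the three cases in Assumption \ref{as}: for the Kirchhoff nonlinearity the embedding $H_0^2(\Omega)\hookrightarrow C(\bar\Omega)$ lifts the local Lipschitz hypothesis on $f:\R\to\R$ to a local Lipschitz map $H_0^2(\Omega)\to L_2(\Omega)$; for the von Karman bracket $f(u)=-[u,v(u)+F_0]$ the required $H_0^2(\Omega)\to L_2(\Omega)$ bound is standard (see, e.g., \cite[Ch.~1]{Chueshov} and \cite[Ch.~1]{springer}); the Berger term is immediate since $u\mapsto\|\nabla u\|^2\Delta u$ is smooth $H_0^2(\Omega)\to L_2(\Omega)$. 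Combined with the $C_0$-semigroup bound $\|T_t\|_{Y\to Y}\le Ce^{\omega t}$ and the Banach contraction principle on $C([0,T^*];Y)$, this produces, for every $y_0\in Y$, a unique local mild solution on a maximal interval $[0,T_{\max})$ together with the blow-up alternative $T_{\max}<\infty\Rightarrow \|y(t)\|_Y\to\infty$. For $y_0\in Y_1$, invariance of $Y_1$ under the perturbed flow (which follows from differentiating the Duhamel formula and using that $\cF$ maps $Y_1\to Y_1$ for each nonlinearity listed) upgrades the mild solution to a strong solution in the sense of Section \ref{solutions}; the weak formulation then follows by a routine passage to the limit using the $Y$-continuity of the map $y_0\mapsto y(\cdot)$.

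The decisive step is to rule out blow-up and obtain $T_{\max}=\infty$. For a strong solution the formal computation leading to (\ref{energyrelation}) now yields
\begin{equation*}
\cE(t) + U\int_0^t \langle u_x,\gamma[\psi]\rangle\,d\tau = \cE(0),
\qquad \cE(t)=E_{pl}(t)+E_{fl}(t),
\end{equation*}
where $E_{pl}$ now contains $\Pi(u)$. For each of the nonlinearities in Assumption \ref{as}, $\Pi$ is bounded below modulo a lower-order term in $\|u\|$ (the Kirchhoff liminf condition, the sign of the Airy energy $\tfrac14\|\Delta v(u)\|^2$ for von Karman, and the explicit quartic form for Berger), so that $\cE(t)+c\|u(t)\|^2$ is topologically equivalent to $\|y(t)\|_Y^2$. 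The remaining task is to tame the boundary leakage $\int_0^t\langle u_x,\gamma[\psi]\rangle\,d\tau$, where $\gamma[\psi]$ is not classically defined on finite-energy flows. This is handled by invoking exactly the hidden trace regularity of $\gamma[\psi]$ already established for the linear problem in the proof of Theorem \ref{th:lin}, which yields an estimate of the form
\begin{equation*}
\Bigl|\int_0^t \langle u_x,\gamma[\psi]\rangle\,d\tau\Bigr|
\le \varepsilon \sup_{[0,t]} \cE(\tau)
+ C_\varepsilon \int_0^t \bigl(\cE(\tau)+1\bigr)\,d\tau.
\end{equation*}
Gronwall then produces $\|y(t)\|_Y\le C(\|y_0\|_Y,T)$ on every finite interval, preventing blow-up. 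For general $y_0\in Y$ the same bound is obtained by approximation using strong solutions and the Lipschitz dependence of the solution map on data.

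The anticipated main obstacle is precisely this a priori estimate, and inside it the control of the boundary term involving $\gamma[\psi]$: without the hidden trace theory from Theorem \ref{th:lin} this term is simply not defined on $Y$, and the standard ``shift of inner product'' trick used in the subsonic regime \cite{webster} is unavailable because the flow energy $E_{fl}^{(1)}$ is indefinite for $U>1$. Once the flow-trace estimate is in hand, the nonlinear theorem reduces cleanly to the abstract perturbation scheme sketched above.
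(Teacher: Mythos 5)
Your plan matches the paper's: treat the nonlinearity as a locally Lipschitz perturbation $\cF(y)=(0,0;0,-f(u))$ of the generator $\bA+\bP$ from Theorem \ref{th:lin}, run a Duhamel contraction for local mild solutions, verify the three models satisfy the local Lipschitz hypothesis, and rule out finite-time blow-up by combining the energy identity \eqref{energyrelation} with the hidden trace regularity of Lemma \ref{le:FTR}; the paper packages the last step as Theorem \ref{abstract}, with the coercivity condition \eqref{8.1.1c1}, and then verifies it for each nonlinearity.

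One step, however, is incorrect as stated: the claim that $\cF$ maps $Y_1$ into $Y_1$. Since $\cF(y)=(0,0;0,-f(u))$, membership of $\cF(y)$ in $Y_1$ would require the Neumann compatibility condition in \eqref{Y1} to hold with first component $0$ and fourth component $-f(u)$, i.e. $f(u)\cdot\mathbf{1}_\Omega \in H^1(\R^2)$, which in particular forces $f(u)|_{\partial\Omega}=0$; this fails for the Kirchhoff and von Karman nonlinearities. It is also unnecessary: $\cF:Y\to Y$ locally Lipschitz leaves the domain of $(\bA+\bP)+\cF$ equal to $\cD(\bA+\bP)$, and a localized version of Pazy's Theorem 1.6, together with the global a priori bound, gives invariance of $Y_1$ and strong solutions for $y_0\in Y_1$ --- this is the argument the paper runs (see the remarks around \eqref{airy-lip} and the concluding regularity bootstrap). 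A smaller imprecision: the leakage bound involving $\varepsilon\sup_{[0,t]}\cE(\tau)$ does not fall out of \eqref{trace-reg-est-M}; what the trace estimate actually yields is $\bigl|\int_0^t\langle u_x,\gamma[\psi]\rangle\,d\tau\bigr| \le C\bigl(\cE(0)+\int_0^t \cE(\tau)\,d\tau + t\bigr)$, which is precisely what the Gronwall argument needs.
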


  \begin{remark}
  In comparing the results obtained with a subsonic case, there are two major differences at the qualitative level:
\begin{enumerate}
  \item  Regularity of strong solutions obtained in the subsonic case \cite{jadea12} coincides with regularity expected for classical solutions. In the supersonic case, there is a  loss of differentiability in the flow  in the tangential  $x$ direction, which then propagates to the  loss of differentiability in the structural variable $u$.
      \item
 In the subsonic case one shows that the resulting semigroup is {\it bounded} in time,
  see \cite[Proposition 6.5.7]{springer} and also \cite{jadea12,webster}.
  This property  could not be shown in this analysis, and most likely does not hold.  The leack of the energy in energy relation can not be compensated for by the nonlinear terms (unlike the subsonic case).
\end{enumerate}
   \end{remark}
\subsection{Proof Strategy}
In order to orient and guide the reader through various stages   of the proof, we  briefly outline the main
steps.
\begin{enumerate}
\item As motivated by the linear theory in the subsonic case, we use the modified energy (as given in the previous section) to setup the linear problem abstractly. We decompose the linear dynamics into a dissipative piece $\bA$ (unboxed below) and a perturbation piece $\bP$ (boxed below): \begin{equation}\label{sys1*}\begin{cases}
(\partial_t+U\partial_x)\phi =\psi& \text{ in } \realsthree_+ \times(0,T),
\\
(\partial_t+U\partial_x)\psi=\Delta \phi-\mu \phi&\text{ in } \realsthree_+\times (0,T),
\\
\Dn \phi = -\partial_tu\cdot \mathbf{1}_{\Omega}(\xb) -\framebox{$U\partial_xu \cdot \mathbf{1}_{\Omega}(\xb) $}& \text{ on }
\realstwo_{\{(x,y)\}} \times (0,T),
\\
u_{tt}+\Delta^2u=\gamma[\psi]&\text{ in }  \Om \times (0,T),\\
u=\Dn u = 0 & \text{ in } \pd\Om \times (0,T).\\
\end{cases}
\end{equation} We then proceed to show that $\bA$ (corresponding to the unboxed dynamics above) is $m$-dissipative on the state space.  While ``dissipativity" is natural and built in  within the structure of the problem, the difficulty encountered is in establishing   the {\it maximality} property for the generator.
The  analysis of the   the resolvent operator  is no longer reducible to  strong elliptic theory (unlike the classical wave equation). The ``loss of ellipticity" prevents us from using  the known tools.    To handle this   we develop  a non-trivial approximation argument to justify the formal calculus; maximality will then be achieved  by constructing a suitable
bilinear form to which  a version of  Lax-Milgram argument applies.
\item To handle the ``perturbation" of the dynamics, $\bP$ (boxed) we cast the problem into an abstract boundary control framework.
In order to achieve this, a critical ingredient in the proof is demonstrating  ``hidden"  boundary regularity for the acceleration potential $\psi$ of the flow. It will be shown that this component is an element of a negative Sobolev space
$L_2(0, T; H^{-1/2} (\Omega))$.
The above regularity allows us to show that the term $<u_x,\gamma[\psi]>$ is well-defined via duality.
Consequently,   the  problem  with  the  ``perturbation" of the dynamics $\bP$,   can be recast
as an abstract boundary control problem with appropriate continuity properties of the control-to-state maps.

\item Then, to piece the operators together as $\bA+\bP$, we make use of variation of parameters with respect to the generation property of $\bA$ and appropriate dual setting. This yields an integral equation on the state space (interpreted via duality) which must be formally justified in our abstract framework. This is critically dependent upon point (2.) above. We then run a fixed point argument on the appropriate space to achieve a local-time solution for the fully linear Cauchy problem  representing formally  the evolution $y_t=(\bA+\bP)y \in Y$. In order to identify its generator, we apply Ball's theorem \cite{ball} which then yields global solutions.
\item Lastly, to move to the nonlinear problem, we follow the standard track of writing the nonlinearity as a locally Lipschitz perturbation on the state space $Y$.
 In the most difficult case of the von Karman nonlinearity
    the latter is possible due to the established earlier  ``sharp" regularity of Airy's stress function \cite[pp.44-45]{springer}. This allows us to
 implement  local theory with a priori bounds (for $T$ fixed) on the solution.
The  global a priori bounds  result from an appropriate compactness-uniqueness argument supported by a maximum principle applied to Monge-Ampere equation; the latter implies
 $||u||_{L_{\infty} (\Omega) } \leq C ||[u,u]||^{1/2}_{L_1} $ for $ u \in  H^2(\Omega) \cap H_0^1(\Omega)$, see
\cite[Sections 1.4 and 1.5, pp. 38-58]{springer} for details.
  The above procedure  yields a nonlinear semigroup which, unlike the case of subsonic flow,  is not necessarily bounded in time (see the case of subsonic dynamics \cite{springer,jadea12,webster}) and the resolvent of this semigroup is not compact.
\end{enumerate}
\section{Abstract Setup}
\subsection{Operators and Spaces}

Define the operator $A=-\Delta+\mu$  with some
 $\mu>0$\footnote{
We include the term $\mu I$ in the operator $A$ to avoid a zero point in the spectrum. After we produce our semigroup analysis, we will negate this term in the abstract formulation of the problem in order to maintain the equivalence of the abstract problem and the problem as given in (\ref{sys1}).
}
 and with the domain
\[
\cD(A)=\{u\in H^2(\realsthree_+): ~\Dn u = 0\}.
 \]
 Then $\cD(A^{1/2})=H^1(\realsthree_+)$ (in the sense of topological equivalence).
We also
 introduce the standard linear plate operator with clamped boundary conditions:
 $\mathscr{A}=\Delta^2$ with the domain
 \[
 \cD(\mathscr{A})=\{u\in H^4(\Omega):~u|_{\partial \Omega}=\Dn u|_{\partial \Omega}=0\}=\big(H^4\cap H_0^2\big)(\Omega).
  \]
  Additionally, $\mathscr{D}(\mathscr{A}^{1/2})=\big(H^2\cap H_0^1\big)(\Omega)$. Take our state variable to be $$y\equiv(\phi, \psi; u, v) \in \big(\cD(A^{1/2})\times L_2(\realsthree_+)\big)\times\big( \cD(\cA^{1/2})\times L_2(\Omega)\big)\equiv Y.$$ We work with $\psi$ as an independent state variable, i.e., we are not explicitly taking $\psi = \phi_t+U\phi_x$ here.

To build our abstract model, let us define the operator $\bA: \cD(\bA) \subset Y \to Y$ by
\begin{equation}\label{op-A}
\bA \begin{pmatrix}
\phi\\
\psi\\ u\\ v
\end{pmatrix}=\begin{pmatrix}-U\partial_x \phi+\psi\\- U\partial_x\psi-A(\phi+Nv)\\ v \\ -\cA u+N^*A\psi \end{pmatrix}
\end{equation}
where the Neumann map $N$ is defined as follows:
$$
Nf=g~\iff~(-\Delta+\mu)g=0~\text{ in }~\R^3_+  ~ \text{ and } ~ \Dn g = f ~\text{ for } ~ z=0.
$$
Properties of this map on bounded domains and $\realsthree_+$ are well-known
(see, e.g., \cite[p.195]{redbook} and \cite[Chapter 6]{springer}), including the facts that
$N\, : H^s(\R^2)\mapsto H^{s+1/2}(\R^3_+)$ and
\begin{equation*}
N^*Af=\gamma[f]~~\mbox{for}~~f \in \cD(A),
\end{equation*}
and via density, this formula holds for all $f \in \cD(A^{1/2})$ as well. Additionally, when we write $Nv$ for $v: \Omega \to \reals$, we implicitly mean $N v_{ext}$, where $v_{ext}$ is the extension\footnote{We must utilize
this zero extension owing to the structure of the boundary condition
for $\Dn\phi$ in \eqref{sys1*}.}
 by $0$ outside $\Omega$.
\par
The domain of $\cD(\bA) $ is given by
\begin{equation}\label{dom-bA}
\cD(\bA) \equiv \left\{ y =  \begin{pmatrix}
\phi\\
\psi\\ u\\ v
\end{pmatrix}\in Y\; \left| \begin{array}{l}
-U \Dx \phi + \psi \in H^1(\R^3_+),~\\ -U \Dx \psi  -A (\phi + N v ) \in L_2(\R^3_+), \\
v \in \cD(\cA^{1/2} )= H_0^2(\Omega),~
-\cA u + N^* A \psi \in L_2(\Omega) \end{array} \right. \right\}
\end{equation}
We can further characterize the domain:
\par
Since on  $\cD(\bA)$ we have
that $\psi=U \Dx \phi + h$ for some  $h\in H^1(\R^3_+)$,
 then
 $\gamma [\psi] \in H^{-1/2}(\R^2)$
 (we identify $\R^2$ and $\pd\R^3_+$).
 This implies   $\gamma [\psi]\big|_{\Om} \in H^{-1/2}(\Om) =[H_0^{1/2}(\Om)]'$.
 Therefore we have that
  $\cA u  \in H^{-1/2}(\Omega) \subset
[\cD(\cA^{1/8})]'$  (recall that by interpolation
the relation $\cD(\cA^{1/8}) \subset H_0^{1/2}(\Omega)$ holds).  Thus
$$
u \in \cD(\cA^{7/8}) \subset H^{7/2}(\Omega).
$$
Moreover, for smooth functions $\widetilde{\psi} \in L_2(\R^3_+) $
we have that
$$
(U \Dx \psi + A \phi,\widetilde{ \psi})_{L_2(\R^3_+)}=
(U \Dx \psi + A (\phi  +  N v),\widetilde{ \psi})_{L_2(\R^3_+)}-
< v, \gamma [\widetilde{\psi}] >_{L_2(\R^2)}.
 $$
Thus  on the account that $(\phi,\psi;u,v) \in \cD(\bA)$, so that
$U \Dx \psi  -A (\phi + N v ) \in L_2(\R^3_+)$ and $v \in H_0^2(\Omega)$
we have that
$$
\big|(U \Dx \psi + A \phi,\widetilde{\psi})_{\R^3_+}\big|\le C
\|\widetilde{\psi}\|_{\R^3_+}+
\|\gamma [\widetilde{\psi}]\|_{H^{-2}(\R^2)} $$
for any $\widetilde{\psi} \in L_2(\R^3_+) $ with $\gamma [\widetilde{\psi}] \in H^{-2}(\R^{2})$.
\par
Writing $ \Delta  \phi =(- U \Dx \psi)  + l_2 $ for some $l_2\in L_2(\R_+^3) $
with the boundary conditions $\Dn \phi = v $, where $v\in H^2_0(\Om)$, we easily conclude from standard elliptic theory that
\begin{equation}\label{phi-p}
    \phi = - U A^{-1} \Dx \psi + h_2 ~~\mbox{for some $h_2 \in H^2(\R^3_+)$}.
\end{equation}
Substituting  this relation into  the first condition characterizing the domain
in \eqref{dom-bA}
 we obtain
$$ U^2 \Dx A^{-1} \Dx \psi + \psi = h_1 \in H^1(\R^3_+) $$
which implies
$$
 U^2 \Dx^2 A^{-1} \Dx \psi + \Dx \psi  \in L_2 (\R^3_+)
$$
Introducing the variable $p \equiv A^{-1} \Dx \psi$ one can see
that $p$ satisfies wave equation in the supersonic case
\begin{equation}\label{wave}
 (U^2 -1) \Dx^2 p +(- \Delta_{y,z}+\mu) p  \in L_2(\R^3_+ ),
 \end{equation}
where $ \Dn p =0 $ on the boundary $z =0 $ distributionally.
\par
The observations above lead to the following description of the  domain $\cD(\bA)$.

\begin{lemma}\label{domain}
The domain of $\bA$, $\cD(\bA) \subset Y$, is characterized by:\
$y \in \cD(\bA)$ implies
\begin{itemize}
\item
$    y =(\phi, \psi, u, v)  \in Y, ~~\gamma [\psi] \in H^{-1/2}(\Omega)$,
 \item
$-U \Dx \phi + \psi \in H^1(\R^3_+)$,
\item
$v \in \cD(\cA^{1/2} ) = H_0^2(\Om),~~
 u\in \cD(\cA^{7/8})$,
\item
$ |(-U \Dx \psi - A \phi ,\widehat{ \psi})_{\R^3_+}| < \infty,~~~ \forall  ~\widehat{\psi}\in L_2(\R^3_+) ~\text{ with }~ \gamma [\widehat{\psi}]\in H^{-2}(\R^{2})$,
\item
$ U^2 \Dx^2 A^{-1} \Dx \psi + \Dx \psi  \in L_2 (\R^3_+) $ or (\ref{wave}) holds.
Since by \eqref{phi-p} $ \phi = - U p + h_2 $ for some $h_2 \in H^2(\R^3_+)$,
equation (\ref{wave}) can be also written explicitly in terms of $\phi $ as
\begin{equation*}
(U^2 -1) \Dx^2 \phi+( - \Delta_{y,z}+\mu) \phi  \in L_2(\R^3_+)
\end{equation*}
where $ \Dn \phi  =v_{ext}  $ on the boundary $z =0 $.
\end{itemize}
\end{lemma}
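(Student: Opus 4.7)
The plan is to read off each of the five assertions directly from the defining conditions of $\cD(\bA)$ in \eqref{dom-bA}, using three ingredients: standard trace theory on $\R^3_+$, the intertwining identity $N^*Af=\gamma[f]$, and elliptic regularity applied to the shifted unknown $\phi+Nv$. The bulk of the calculations is already laid out in the discussion preceding the lemma; the task is to organize them into a coherent proof.

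Bullets (i)--(iii) fall out by direct inspection of \eqref{dom-bA}. The inclusion $-U\Dx\phi+\psi\in H^1(\R^3_+)$ is the first entry of \eqref{dom-bA}. Writing $\psi=U\Dx\phi+h$ with $h\in H^1(\R^3_+)$, tangentiality of $\Dx$ gives $\gamma[\Dx\phi]=\Dx\gamma[\phi]\in H^{-1/2}(\R^2)$ (since $\gamma[\phi]\in H^{1/2}(\R^2)$), and $\gamma[h]\in H^{1/2}(\R^2)\subset H^{-1/2}(\R^2)$, so $\gamma[\psi]\in H^{-1/2}(\R^2)$; restriction by duality to $H^{1/2}_0(\Omega)$ then yields $\gamma[\psi]|_{\Omega}\in H^{-1/2}(\Omega)$. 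For the structural component, the identity $N^*A\psi=\gamma[\psi]|_{\Omega}$ combined with the fourth entry of \eqref{dom-bA} forces $\cA u\in H^{-1/2}(\Omega)$, and the interpolation embedding $\cD(\cA^{1/8})\subset H^{1/2}_0(\Omega)$ places $\cA u\in[\cD(\cA^{1/8})]'$, whence $u\in\cD(\cA^{7/8})\subset H^{7/2}(\Omega)$; the membership $v\in H^2_0(\Omega)$ is already recorded in \eqref{dom-bA}.

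Bullet (iv) is a short computation using self-adjointness of $A$ together with the relation $N^*A\widetilde{\psi}=\gamma[\widetilde{\psi}]$ (extended to $\cD(A^{1/2})$ by density): for every $\widetilde{\psi}\in L_2(\R^3_+)$ with $\gamma[\widetilde{\psi}]\in H^{-2}(\R^2)$,
\[
(U\Dx\psi+A\phi,\widetilde{\psi})_{\R^3_+}=\bigl(U\Dx\psi+A(\phi+Nv),\widetilde{\psi}\bigr)_{\R^3_+}-\langle v,\gamma[\widetilde{\psi}]\rangle_{\R^2}.
\]
The first term is bounded by $\|\widetilde{\psi}\|$ thanks to the second entry of \eqref{dom-bA}, and the second by $\|v\|_{H^2}\,\|\gamma[\widetilde{\psi}]\|_{H^{-2}(\R^2)}$, since $v\in H^2_0(\Omega)$ extends by zero to an element of $H^2(\R^2)$.

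The main obstacle is bullet (v), which requires an elliptic regularity step. Rewrite the second entry of \eqref{dom-bA} as $A(\phi+Nv)=-U\Dx\psi-\ell$ with $\ell\in L_2(\R^3_+)$; solving the elliptic problem yields $\phi+Nv=-UA^{-1}\Dx\psi+h_2'$ with $h_2'=-A^{-1}\ell\in\cD(A)\subset H^2(\R^3_+)$, and absorbing $Nv\in H^{5/2}(\R^3_+)$ produces \eqref{phi-p}, i.e.\ $\phi=-UA^{-1}\Dx\psi+h_2$ with $h_2\in H^2(\R^3_+)$. Substituting into the first entry of \eqref{dom-bA} gives $U^2\Dx A^{-1}\Dx\psi+\psi\in H^1(\R^3_+)$, and differentiating once in $x$ yields $U^2\Dx^2 A^{-1}\Dx\psi+\Dx\psi\in L_2(\R^3_+)$. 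Setting $p=A^{-1}\Dx\psi$ and using $(-\Delta+\mu)p=\Dx\psi$ rewrites this as $(U^2-1)\Dx^2 p+(-\Delta_{y,z}+\mu)p\in L_2(\R^3_+)$, which is exactly \eqref{wave}. The $\phi$-form follows from \eqref{phi-p}: the $H^2$ remainder $h_2$ is harmless under any second-order operator on the left, and the Neumann boundary datum $\Dn\phi=v_{\mathrm{ext}}$ is inherited from the membership $\phi+Nv\in\cD(A)$ together with the defining property $\Dn(Nv)=v_{\mathrm{ext}}$ of the Neumann map. The delicate point is that $\Dx\psi$ is only a distribution in $H^{-1}(\R^3_+)$ with no a priori boundary trace, so the elliptic solve must be performed on the shifted variable $\phi+Nv$ (where the forcing sits in $L_2$) rather than on $\phi$ itself; this is precisely the role of the lifting by $N$.
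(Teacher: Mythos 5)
Your proposal is correct and follows essentially the same route as the paper: it reads off bullets (i)--(iii) from \eqref{dom-bA} using trace theory and the identity $N^*A\psi=\gamma[\psi]$ plus the interpolation embedding $\cD(\cA^{1/8})\subset H_0^{1/2}(\Omega)$, derives bullet (iv) from the same add-and-subtract decomposition $(U\Dx\psi+A\phi,\widetilde\psi)=(U\Dx\psi+A(\phi+Nv),\widetilde\psi)-\langle v,\gamma[\widetilde\psi]\rangle$, and obtains bullet (v) by the same elliptic solve leading to \eqref{phi-p} and then \eqref{wave}. The only cosmetic difference is that you perform the elliptic step on the lifted unknown $\phi+Nv$ in $\cD(A^{1/2})$ (absorbing $Nv\in H^{5/2}(\R^3_+)$ into the remainder) whereas the paper phrases it directly as an inhomogeneous Neumann problem for $\phi$ with datum $v_{\mathrm{ext}}$; the two are equivalent, and your closing remark about why the $N$-lifting is needed makes the paper's implicit reasoning explicit rather than introducing a new idea.
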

\par

\subsection{Cauchy Problem and Unbounded Perturbation in Extended Space}
With this setup, we  will be in a position to show that $\bA$ is $m$-dissipative. A peculiar feature introduced by the presence of the supersonic parameter is  the  loss of uniform ellipticity  in  the static version of the perturbed wave operator
and the loss of compactness in the resolvent operator. The domain of  $\bA$ does not posses sufficient regularity. To cope with this difficulty, suitable approximation of  the domain will be introduced.
In view of this, the proof of the maximality property is involved here. The obtained result  will give that the Cauchy problem
\begin{equation}\label{eq-trankt-0}
y_t=\bA y, ~y(0)=y_0 \in Y
\end{equation}
 is well-posed on $Y$. We will then consider the (semigroup) perturbation \begin{equation*}
\bP\begin{pmatrix}\phi\\\psi\\u\\v \end{pmatrix}=\begin{pmatrix}0\\-UAN\partial_x u\\0\\0\end{pmatrix}.
\end{equation*}
The issue here is the  unbounded ``perturbation",  which  does not reside in the state space $Y$. Note that
${\mathscr{R}}\{AN\} \not\subset L_2(\R^3_{+})$, and
only the trivial element $0$ is in the domain of $AN$ when the latter  considered with the values in $Y$.
This fact  forces  us to construct a perturbation theory which operates in extended  (dual) spaces.
This step will rely critically on ``hidden" boundary regularity of the acceleration potential $\psi$
- established in the next section.   As a consequence, we  show that
 the resulting Cauchy problem $y_t = (\bA + \bP)y, ~y(0)=y_0 \in Y$  yields well-posedness for the full system in (\ref{sys1*}). Application of Ball's theorem~\cite{ball}  allows us  to conclude that  $\bA + \bP$, with  an appropriately defined domain, is a generator of a strongly continuous semigroup on $Y$.
\section{Proof of Main Result}
\subsection{ Hidden Regularity of $\gamma[\psi]$}
We consider  the following initial boundary value problem:
\begin{equation}\label{flow-h-U}\begin{cases}
(\partial_t+U\partial_x)^2\phi=\Delta \phi & \text { in } \realsthree_+,\\
\phi(0)=\phi_0;~~\phi_t(0)=\phi_1, \\
\Dn \phi = h(\xb,t) & \text{ on } \realstwo_{\{(x,y)\}}.
\end{cases}
\end{equation}
We assume
\begin{equation}\label{h-cond3.1}
 h(\xb,t)\in L_2^{loc}(0,T;  L_2(\R^2)).
\end{equation}
We note  that
initially, in our studies of the partial dynamics associated to the dissipative part of the dynamics (the system in (\ref{sys1*}) with the boxed term removed), we will take $h(\xb,t) = -\partial_t u$,
where $u$ is a plate component of a 
generalized solution to the unboxed part
of \eqref{sys1*}.
 Later, in considering the perturbation of the dissipative dynamics, we will take
\[
h(\xb,t)=-\big[\partial_t u+U\partial_x \overline u\big]_{ext},
 \]
where $\overline u$ is some other function which belongs the same smoothness class as $u$.
Therefore the requirement in \eqref{h-cond3.1} is reasonable.
 \par
Let $\phi$ be the energy type solution of \eqref{flow-h-U},
i.e.
\begin{equation}\label{w-eq-U-def}
    (\phi,\phi_t)\in L_2(0,T; H^1(\R^3_+)\times L_2(\R^3_+)),~~~\forall\, T>0.
\end{equation}
These solutions exists, at least for sufficiently smooth $h$ (see \cite{miyatake1973} and \cite{sakamoto}).

Our goal is to estimate   the  trace of the acceleration potential  $\phi_t + U \phi_x $ on $z=0$.
The  a priori regularity of $\phi(t) \in H^1(\R^3_+) $ implies via trace theory
 $\phi_x (t)|_{z=0} \in H^{-1/2}(\R^2) $. However,  the a priori regularity of $\phi_t $  does not allow to infer, via trace theory, any notion of a trace.
 Fortunately  we will be able to show that this trace does exist as a distribution and can be measured in a negative Sobolev space. The corresponding result reads:

  \begin{lemma}[{\bf Flow Trace Regularity}]\label{le:FTR}
  Let \eqref{h-cond3.1} be in force.
If $\phi(\xb,t)$  satisfies \eqref{flow-h-U} and \eqref{w-eq-U-def}, then $$\partial_t\gamma[\phi],~~ \partial_x\gamma[\phi]  \in L_2(0,T;H^{-1/2}(\R^2))~~~~\forall\, T>0.
$$
Moreover,  with $\psi = \phi_t + U \phi_x $ we have
\begin{equation}\label{trace-reg-est-M}
\int_0^T\|\gamma[\psi](t)\|^2_{H^{-1/2} (\R^2)}dt\le C_T\left(
E_{fl}(0)+
 \int_0^T\| \Dn \phi(t) \|^2dt\right)
\end{equation}
\end{lemma}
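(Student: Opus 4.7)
My plan is to recognize \eqref{flow-h-U} as a strictly hyperbolic second-order mixed problem on the half-space $\realsthree_+$ and to invoke the sharp boundary-trace regularity theory developed in \cite{miyatake1973,sakamoto,l-t-sharp,tataru}. The operator $L := (\Dt + U\Dx)^2 - \Delta$ has principal symbol $-(\tau+U\xi)^2 + \xi^2 + \eta^2 + \zeta^2$; its $\tau$-discriminant equals $4(\xi^2+\eta^2+\zeta^2)>0$ off the origin, so $L$ is strictly hyperbolic in $t$, and the coefficient of $\zeta^2$ is $-1$, so $\{z = 0\}$ is non-characteristic. The announced estimate \eqref{trace-reg-est-M} is then the concrete manifestation, for $\psi = (\Dt + U\Dx)\phi$, of the general fact that tangential first-order derivatives of finite-energy solutions to such problems possess traces in $L_2(0,T;H^{-1/2}(\R^2))$ whenever the Neumann datum lies in $L_2(0,T;L_2(\R^2))$.

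To carry this out, I first reduce by linearity and density to smooth data, and split $\phi = \phi^I + \phi^{II}$ into the homogeneous-Neumann part $\phi^I$ (which carries the initial data) and the zero-initial-data part $\phi^{II}$ (which carries the boundary data $h = \Dn\phi$). For $\phi^I$, multiplying $L\phi^I = 0$ by $\psi^I = (\Dt + U\Dx)\phi^I$ and integrating by parts over $\realsthree_+\times (0,T)$ yields conservation of the modified energy $E_{fl}^I(t)$, which produces an $L_\infty(0,T;L_2)$-bound on $\psi^I$ and $\nabla\phi^I$; the desired $H^{-1/2}$ trace estimate then follows by applying the homogeneous version of Miyatake's trace identity. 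For $\phi^{II}$, I extend $h$ by zero outside $(0,T)$ and take a tangential Fourier transform in $(t,x,y)$; the ODE in $z$ selected by the decaying branch gives the explicit boundary symbol
\[
\widehat{\psi^{II}}\big|_{z=0} \;=\; \frac{i(\tau + U\xi)}{\mu}\,\hat h, \qquad \mu := \sqrt{\xi^2 + \eta^2 - (\tau + U\xi)^2}, \quad \mathrm{Re}\,\mu \ge 0,
\]
so that the required $L_2 \to L_2(0,T;H^{-1/2})$ continuity of the map $h \mapsto \gamma[\psi^{II}]$ reduces to a weighted Fourier-multiplier estimate for this symbol.

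The main obstacle is the behavior of the boundary symbol near the glancing manifold $\{(\tau+U\xi)^2 = \xi^2+\eta^2\}$, where $\mu$ vanishes and the symbol is not pointwise uniformly bounded. My plan is to dispose of this by the standard microlocal device of partitioning the tangential cotangent space into elliptic, hyperbolic, and glancing zones: in the elliptic and hyperbolic zones the symbol is a bounded Fourier multiplier, while in the glancing zone the half-derivative loss built into the $H^{-1/2}$ target space absorbs the singularity. Concretely, I expect to apply a tangential pseudo-differential multiplier of order $-1/2$ to $L\phi = 0$ and integrate by parts, producing an identity whose boundary contribution is exactly $\|\gamma[\psi]\|^2_{H^{-1/2}(\R^2)}$ modulo terms controlled by $E_{fl}(0)$ and $\|\Dn\phi\|^2_{L_2}$---an approach close in spirit to the sharp-trace multiplier arguments of \cite{l-t-sharp}, although the required loss here (half of a derivative) is strictly weaker than the sharp loss (one-third of a derivative) known for the wave equation, so no delicate microlocal analysis beyond the standard zone decomposition is necessary. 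Combining the contributions from $\phi^I$ and $\phi^{II}$ then yields \eqref{trace-reg-est-M}.
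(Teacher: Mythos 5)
Your high-level skeleton --- superposition into a homogeneous-Neumann part carrying the data (handled by Miyatake) and a zero-initial-data part driven by $h$ (handled by explicit Fourier analysis of the boundary symbol on the half-space) --- is exactly the one used in the paper. Two differences, however, are worth flagging, and the second one is a genuine gap.

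First, the paper begins with the Galilean substitution $\eta(\xb,t)=\phi(\xb+Ute_1,t)$, which turns the convected operator $(\Dt+U\Dx)^2-\Delta$ into the ordinary wave operator $\Dt^2-\Delta$ and turns $\psi=(\Dt+U\Dx)\phi$ into $\eta_t$. You instead work with the untransformed symbol $i(\tau+U\xi)/\mu$, which is equivalent after the change $\tau\mapsto\tau+U\xi$, but the substitution costs nothing and substantially cleans up the symbol algebra; its omission is not an error, just a missed simplification.

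The real problem is your treatment of the glancing manifold in the inhomogeneous step. You take a plain Fourier transform in $(t,x,y)$ after extending $h$ by zero and then claim that ``the half-derivative loss built into the $H^{-1/2}$ target space absorbs the singularity.'' That is false as a pointwise multiplier bound: near glancing (say in the Galilean frame, where $|\mu|^2=\sigma^2$) the weighted squared symbol is
\[
\frac{\sigma^2}{\bigl||\mu|^2-\sigma^2\bigr|}\cdot\frac{1}{(1+|\mu|^2)^{1/2}},
\]
which blows up as $|\mu|^2-\sigma^2\to 0$ with $\sigma$ of moderate size. In fact, at glancing the ODE $\widehat\eta_{zz}=(|\mu|^2-\sigma^2)\widehat\eta$ degenerates and has no decaying branch at all, so the ``explicit solver'' you write down is not even defined there. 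The paper avoids this by taking a Fourier--\emph{Laplace} transform with fixed abscissa $\mathrm{Re}\,\tau=\xi>0$: the denominator becomes $\sqrt{s}$ with $s=|\mu|^2+\tau^2=(|\mu|^2-\sigma^2+\xi^2)+2i\xi\sigma$, whose modulus is bounded below by $c_0(\xi)>0$ off the origin, and Lemma~\ref{le:m} of the paper gives the uniform bound $|m(\xi,\sigma,\mu)|\le 2(1+|\mu|^2/\xi^2)^{1/4}$ for all $(\sigma,\mu)$ with $\xi\neq 0$. It is this $|\mu|$-growth, not a glancing singularity, that the $H^{-1/2}$ weight is actually used to cancel, producing the factor $(1+\xi^{-2})^{1/2}$ and hence a constant depending on $T$ through the Laplace weight $e^{\xi t}$. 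Your pseudo-differential zone decomposition ``in spirit of \cite{l-t-sharp}'' might be made to work, but as written you have replaced the one place where the argument has content by an incorrect assertion, so the proposal does not yet yield \eqref{trace-reg-est-M}.
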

The above result is critical for the arguments in later portion of this treatment. Specifically, the above result holds for \textit{any} flow solver; we will be applying this result in the case where $\Dn \phi = - v \in C(0,T;L_2(\Omega))$ coming from a semigroup solution generated by $\bA$.
\begin{proof}
One can see that the function $\eta(\xb,t)=\phi(\xb +Ut e_1, t)\equiv \phi(x +Ut, y,z, t)$
possesses the same properties as in
\eqref{w-eq-U-def} and solves the problem
\begin{equation*}
\begin{cases}
\partial_t^2\eta=\Delta \eta & \text { in } \realsthree_+,\\
\eta(0)=\phi_0;~~\eta_t(0)=\phi_1+U\partial_x\phi_0, \\
\Dn \phi = h^*(\xb,t) & \text{ on } \realstwo_{\{(x,y)\}},
\end{cases}
\end{equation*}
where  $ h^*(\xb,t)=h(\xb +Ut e_1, t)$ which is also belong to
$L_2(0,T;  L_2(\R^2))$, $\forall\, T>0$.
\par
Our goal is to estimate   the time derivative $\eta_t$ on $z=0$.
Clearly, the a priori regularity of $\eta_t $  does not allow to infer any notion of trace.
 However, we will be able to show that
\begin{equation}\label{trace-reg-est}
\int_0^T||\gamma[\eta_t](t)||^2_{H^{-1/2} (\R^2)}dt\le C_T\left( || \eta_0||_{H^1(\realsthree_+)}^2+||\eta_1||^2+
 \int_0^T || h^*(t) ||^2_{L_2 (\R^2)} dt \right).
\end{equation}
Since $\eta_t(\xb,t)\equiv \psi(\xb +Ut e_1, t)$,
we can make inverse change of variable and obtain the statement  of Lemma~\ref{le:FTR}.
\par
In order to prove (\ref{trace-reg-est}) we  apply a principle of superposition with respect to the initial and boundary data.
\par
\noindent{\bf Step 1}: Consider $h^* =0 $.
Here we apply Theorem 3 in \cite{miyatake1973}  with $k =0$.
This yields
\begin{multline}\label{ic}
||\eta(t)||^2_{H^1(\R^3_+)} + ||\eta_t(t)||^2
+ ||\gamma [\eta_t]||^2_{L_2(0,T; H^{-1/2}(\R^2)) } + ||\gamma [\eta] ||^2_{L_2(0,T; H^{1/2}(\R^2))} \\
\leq C_T\left[ ||\eta(0)||^2_{H^1( \R^3_+)} + ||\eta_t(0)||^2\right].
\end{multline}
 \noindent{\bf Step 2: } Consider  zero initial data and  arbitrary $ h^* \in L_2((0, T) \times \R^2 ) $.
We claim that the following estimate holds:
 \begin{equation}\label{trace}
 ||\gamma [\eta]_t||^2_{L_2(0,T; H^{-1/2}(\R^2) } + ||\gamma [\eta] ||^2_{L_2(0,T; H^{1/2}(\R^2))}
\leq C_T\big[ || h^*||^2_{L_2(\Sigma)}  \big].
\end{equation}
The proof of the estimate in (\ref{trace}) depends on the fact that the problem is defined on a half-space.

Since  $\phi_0, \phi_1 =0 $ we then take the Fourier-Laplace transform:
\begin{align*} t \to &~ \tau=\xi + i\sigma,~~~
(x,y) \to  i \mu = i(\mu_1,\mu_2),
\end{align*}
with $\xi$ fixed and sufficiently large.

Now, let $\widehat{\eta} = \widehat {\eta}(z, \mu, \sigma)$ be the Fourier-Laplace transform of $\eta$ in $x$, $y$ and $t$, i.e.
$$
\widehat {\eta}(z, \mu, \tau) = \frac{1}{(2\pi )^2}\int_{{\R}^2} dx dy
\int_{0} ^{+\infty} dt e^{-\tau t} \cdot e^{-i (x\mu_1+y\mu_2)}\cdot
\eta (x,y,z,t), \quad Re\tau >0.
$$
This yields the equation \begin{equation*}
\widehat{\eta}_{zz}=(|\mu|^2+\xi^2-\sigma^2+2i\xi\sigma)\widehat{\eta},
\end{equation*}
 with transformed boundary condition,
\begin{equation*}
\widehat{\eta}_z(z=0)=-\widehat{h^*}(\mu,\tau),~~ \tau=\xi + i\sigma.
\end{equation*}
Solving the ODE in $z$ and choosing the solution which decays as $z \to +\infty$,
 we have
 \begin{equation*} \widehat{\eta}(z,\mu,\tau) = \frac{1}{\sqrt{s}}\widehat{h^*}(\mu,\tau)\exp(-z\sqrt{s}),
 \end{equation*}
 with
 \[
 s \equiv |\mu|^2+\tau^2=\big( |\mu|^2-\sigma^2+\xi^2\big)+2i\xi \sigma;
  \]
  where the square root $\sqrt{s}$ is chosen such that
$ Re\sqrt{\tau^2 +|\mu|^2}>0$ when $Re~ \tau >0$.
  On the boundary $z=0$ we have $\widehat{\eta}(z=0,\mu,\tau)=\frac{1}{\sqrt{s}}\widehat{h^*}(\mu,\tau)$. Taking the time derivative amounts to premultiplying by $\tau=\xi + i \sigma$, and with a slight abuse of notation, we have \begin{equation*}
\widehat{\eta}_t(z=0,\mu,\tau)=\frac{\tau}{\sqrt{s}} \widehat{h^*}(\mu,\tau),
~~ \tau=\xi + i \sigma.
\end{equation*}
Denoting the multiplier $\dfrac{\xi+i\sigma}{\sqrt{s}} \equiv m(\sigma,\mu)$, we can infer the trace regularity of $\eta$ from $m(\sigma,\mu)$:
\begin{align*} |m(\xi,\sigma,\mu)| = &~\Big| \dfrac{\xi+i\sigma}{\sqrt{s}}\Big|
= \dfrac{\sqrt{\xi^2+\sigma^2}}{(|s|^2)^{1/4}} \\
=&~\dfrac{\sqrt{\sigma^2+\xi^2}}{\big((|\mu|^2-\sigma^2)^2+\xi^4+2\xi^2\sigma^2+2|\mu|^2\xi^2\big)^{1/4}}\; .
\end{align*}

\begin{lemma}\label{le:m}
  We have the following estimate
\begin{equation}\label{m-est}
     |m(\xi,\sigma,\mu)|\le 2 \left[ 1+\frac{|\mu|^2}{\xi^2}\right]^{1/4}~~~
     \forall~ (\xi;\sigma,\mu)\in \R^4,~~ \xi\neq 0.
\end{equation}
\end{lemma}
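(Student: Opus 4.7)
The quantity $|m(\xi,\sigma,\mu)|$ depends only on $a := \xi^2$, $b := \sigma^2$, and $c := |\mu|^2$, with $a>0$. Writing
\[
f(b) \;:=\; |m|^4 \;=\; \frac{(a+b)^2}{|s|^2}, \qquad |s|^2 = (c+a-b)^2 + 4ab,
\]
my plan is to fix $a,c$ and optimize $f$ in $b \ge 0$. A convenient alternative expression, obtained from $s - c = \tau^2$ and $\operatorname{Re}(\tau^2) = a-b$, is $|s|^2 = (a+b)^2 + 2c(a-b) + c^2$, which matches the direct expansion and makes the role of $c$ transparent. Since \eqref{m-est} is equivalent to $f(b) \le 16\,(1+c/a)$, any reasonably clean bound of the form $f(b) \le C(1 + c/a)$ will suffice.

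The main step is a short one-variable calculus argument. Differentiating $f$ and factoring out the positive quantity $2(a+b)/|s|^4$, the numerator of $f'(b)$ becomes $|s|^2 - (a+b)(a+b-c)$, which I expect to simplify by an elementary algebraic identity (using, for example, $(c+a-b)^2 - (a+b)^2 = (c-2b)(c+2a)$) to $c(c+3a-b)$. If so, then for $c=0$ the function $f$ is constant (in fact $\equiv 1$), and for $c>0$ the unique critical point is $b^{*} = 3a + c$, where direct substitution yields $|s|^2 = 4a(4a+c)$ and $(a+b^{*})^2 = (4a+c)^2$, so that
\[
f(b^{*}) \;=\; \frac{(4a+c)^2}{4a(4a+c)} \;=\; 1 + \frac{c}{4a}.
\]
A check of the endpoints $f(0) = a^2/(a+c)^2 \le 1$ and $\lim_{b\to\infty} f(b) = 1$ then confirms that $b^{*}$ is the global maximum on $[0,\infty)$.

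Combining these observations yields $|m|^4 \le 1 + c/(4a) \le 1 + |\mu|^2/\xi^2$, hence
\[
|m(\xi,\sigma,\mu)| \;\le\; \left(1 + \tfrac{|\mu|^2}{\xi^2}\right)^{1/4} \;\le\; 2\left(1 + \tfrac{|\mu|^2}{\xi^2}\right)^{1/4},
\]
which is strictly stronger than \eqref{m-est} (the constant $2$ on the right-hand side is simply a slack factor). I do not anticipate any conceptual obstacle; the only step requiring care is the algebraic simplification of the derivative's numerator, which is routine. Well-posedness of $f$ itself is not an issue either: since $|s|^2 \ge 4ab$ and $|s(0)|^2 = (a+c)^2 > 0$, the denominator never vanishes on the domain of interest.
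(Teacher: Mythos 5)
Your proof is correct, and it takes a genuinely different route from the paper. The paper partitions the $(|\mu|,\sigma)$-quadrant into three regions according to whether $|\mu|^2$ is much smaller than, much larger than, or comparable to $\sigma^2$; in the first two regions the term $\big||\mu|^2-\sigma^2\big|^2$ in the denominator already dominates and gives $|m|\le 2$, while in the middle region the paper drops that term and uses the cross term $2|\mu|^2\xi^2$ to extract the factor $|\xi|^{-1/2}$ and arrive at the stated bound. You instead fix $a=\xi^2$, $c=|\mu|^2$ and optimize $f(b)=|m|^4$ exactly over $b=\sigma^2\ge0$. Your derivative computation is correct: after factoring out $2(a+b)/|s|^4$, the remaining factor $|s|^2-(a+b)(a+b-c)$ does simplify to $c(c+3a-b)$ via $(c+a-b)^2-(a+b)^2=(c-2b)(c+2a)$, giving the unique interior critical point $b^*=3a+c$ with $|s|^2=4a(4a+c)$ and $f(b^*)=1+c/(4a)$; the endpoint checks $f(0)\le1$ and $f(b)\to1$ confirm it is the global maximum. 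Thus you obtain $|m|\le\bigl(1+|\mu|^2/(4\xi^2)\bigr)^{1/4}$, which is strictly sharper than \eqref{m-est} and in fact optimal. The trade-off is methodological rather than substantive: the paper's case split is a purely inequality-based argument that avoids calculus, while your optimization is shorter, yields the sharp constant, and makes the structure of the extremizer ($\sigma^2=3\xi^2+|\mu|^2$) explicit. Either serves equally well for the downstream trace estimate, which only uses the $\bigl(1+|\mu|^2/\xi^2\bigr)^{1/2}$ growth to close the Parseval argument.
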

\begin{proof}
First, we can take each of the arguments $\xi$, $\sigma$ and $|\mu|$ to be positive.
 Next we consider the partition of  the first quadrant of the $(|\mu|,\sigma)$-plane as follows: $$
 \begin{cases}(a)~ |\mu| \le  \sigma/\sqrt{2}, \\ (b)~ |\mu| \ge \sqrt{2}\sigma,\\ (c)~\sigma/\sqrt{2}< |\mu| < \sqrt{2}\sigma.\end{cases}.
 $$
In cases (a) and (b) above, we
 can write
\begin{equation*} |m(\xi,\sigma,\mu)| \le \dfrac{\sqrt{\sigma^2+\xi^2}}{\Big(\big| |\mu|^2-\sigma^2\big|^2+\xi^4\Big)^{1/4}}  \le \dfrac{\sqrt{\sigma^2+\xi^2}}{\Big(\sigma^4/4+\xi^4\Big)^{1/4}} \le 2.
\end{equation*}
 In case (c) we have
 \begin{align*}
 |m(\xi,\sigma,\mu)| \le &\dfrac{\sqrt{\sigma^2+\xi^2}}{\big(\xi^4+2\xi^2\sigma^2+2|\mu|^2\xi^2\big)^{1/4}}
 \le \dfrac1{|\xi|^{1/2}}
 \dfrac{\sqrt{\sigma^2+\xi^2}}{\big(\xi^2+2\sigma^2+2|\mu|^2\big)^{1/4}}
  \\
 \le &\dfrac1{|\xi|^{1/2}}
 \dfrac{\sqrt{\sigma^2+\xi^2}}{\big(\xi^2+3\sigma^2\big)^{1/4}}\le
 \dfrac1{|\xi|^{1/2}}
 \big(\sigma^2+\xi^2\big)^{1/4}\le \dfrac1{|\xi|^{1/2}}
 \big(2|\mu|^2+\xi^2\big)^{1/4}.
  \end{align*}
  This implies the  estimate in \eqref{m-est}.
\end{proof}
By the inverse Fourier-Laplace transform we have that
\[
e^{-\xi t}\eta_t (x,y,z=0,t) = \frac{1}{2\pi }\int_{{\R}^2} d\mu
\int_{-\infty} ^{\infty} d\sigma
e^{i\sigma t} \cdot e^{i( x\mu_1+y\mu_2)}\cdot
m(\xi,\sigma,\mu)
\widehat {h^*} (\mu, \xi +i\sigma).
\]
Thus by the Parseval  equality
\[
n(\xi;\eta_t)\equiv\frac{1}{2\pi}\int_0^{+\infty}\|e^{-\xi t}\eta_t (z=0,t)\|^2_{H^{-1/2}(\R^2)}dt =\int_{{\R}^2} d\mu
\int_{-\infty} ^{\infty} d\sigma
\dfrac{|m(\xi,\sigma,\mu)|^2}{ (1+|\mu|^2)^{1/2}}
|\widehat {h^*} (\mu, \xi +i\sigma)|^2
\]
and hence
\[
n(\xi;\eta_t)\le (1+\xi^{-2})^{1/2}\int_{{\R}^2} d\mu
\int_{-\infty} ^{\infty} d\sigma
|\widehat {h^*} (\mu, \xi +i\sigma)|^2
\]
Since
$$
\widehat {h^*} (\mu, \xi +i\sigma) = \frac{1}{ (2\pi )^2 }\int_{{\R}^2} dx dy
\int_{0} ^{+\infty} dt ~e^{-i\sigma t} \cdot e^{-i (x\mu_1+y\mu_2)}\cdot
e^{-\xi t} h^* (x,y,t),~~ \xi>0,
$$
we obtain that
\[
n(\xi;\eta_t)\le 2\pi(1+\xi^{-2})^{1/2} \int_{{\R}^2} d xdy
\int_{0} ^{\infty} d  t~ e^{-\xi t}  |h^* (x,y,t)|^2.
\]
This implies the estimate for
$||\gamma [\eta]_t||^2_{L_2(0,T; H^{-1/2}(\R^2) }$
in \eqref{trace}. To obtain the corresponding bound for
$||\gamma [\eta] ||^2_{L_2(0,T; H^{1/2}(\R^2))}$
we use a similar argument.
\par
The relations in \eqref{ic} and \eqref{trace}
yield
the conclusion of Lemma~\ref{le:FTR}.
\end{proof}

\begin{remark}
In Lemma~\ref{le:FTR} we could also take an arbitrary  smooth domain $\cO$
instead of  $\R^3_{+}$.
Indeed let
$Q = \cO \times (0,T)$ and $\Sigma=\pd\cO  \times (0, T)$.
Assuming  {\it a priori $H^1(Q)$ }  regularity of the solution, then one can show
that $\gamma [\eta_t] \in L_2(0,T; H^{-1/2}(\pd\cO) )$ for all smooth domains.
The a priori $H^1(Q) $ regularity is automatically satisfied
when the Neumann  datum $h^*$ is zero and the initial data are of finite energy ($H^1\times L_2$).
In the case  when $h^*$ is an arbitrary element of $L_2(\Sigma) $,  the corresponding estimate takes the form
\begin{equation}\label{trace1-n}
 ||\gamma [\eta_t]||^2_{L_2(0,T; H^{-1/2}(\R^2) )} + ||\gamma [\eta] ||^2_{L_2(0,T; H^{1/2}(\R^2))}
\leq C_T\big[ || h^*||^2_{L_2(\Sigma)} + ||\eta||^2_{H^1(Q) }   \big].
\end{equation}
The proof of this estimate can be  obtained via microlocal analysis by adopting the argument given in
\cite{l-t-sharp,miyatake1973}.
In our case when $\cO=\R^3_{+}$  we have the estimate in \eqref{trace} which
 does not  contain the term $||\eta||^2_{H^1(Q)}$ and hence it
 can be extended to less regular solutions.
In the case of  general domains
$L_2(\Sigma)$ Neumann boundary data produce in wave dynamics  only
$H^{2/3} (Q) $ solutions   with
less regular (than (\ref{trace1-n})) boundary traces
$\gamma[\eta] \in H^{1/3}(\Sigma) $ and $\gamma[\eta_t] \in H^{-2/3}(\Sigma)$
  (see \cite{tataru} and also \cite{l-t-sharp}). The above result is optimal
  and can not be improved, unless special geometries
  for $\Omega$ are considered. We also note that the above mentioned result  improves upon   \cite{miyatake1973} where the  interior  regularity with $L_2(\Sigma)$ Neumann data  is only  $\eta \in H^{1/2}(Q)$,
   rather than $H^{2/3}(Q)$.   Thus, for general domains  we observe
 an additional  loss, with respect
 to (\ref{trace}), of smoothness for the boundary traces $\gamma [\eta] $ ($1/6 =2/3 - 1/2$ of the derivative).
  \end{remark}
\subsection{Linear Generation of Unperturbed Dynamics}
Our main result in this section is the following assertion:
\begin{proposition}\label{pr:op-bA}
 The operator  $\bA$ given by \eqref{op-A} and \eqref{dom-bA}
 is  maximal, dissipative  and skew-adjoint (i.e. $\bA^*=-\bA$).
 Thus by the Lumer-Phillips theorem (see \cite{Pazy}) $\bA$ generates a strongly continuous \textnormal{isometry group} $e^{\bA t}$ in $Y$.
\end{proposition}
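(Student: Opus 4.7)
The plan is to verify the hypotheses of Lumer–Phillips (equivalently, Stone's theorem) by establishing (i) skew-symmetry $(\bA y,y)_Y=0$ on $\cD(\bA)$ and (ii) maximality, i.e.\ surjectivity of $\lambda I-\bA$ and $\lambda I+\bA$ for some $\lambda>0$. Since the formal generator $\bA$ has the natural symplectic structure inherited from the energy form $\cE$ in (\ref{energies}), the key difficulties are (a) the low regularity of elements of $\cD(\bA)$ (in particular $\psi$ need only lie in $L_2(\R^3_+)$ and $u$ only in $\cD(\cA^{7/8})$, per Lemma~\ref{domain}), which prevents a naive integration by parts; and (b) the loss of ellipticity of the stationary flow operator when $U>1$, which blocks a direct Lax–Milgram/elliptic regularity argument for the resolvent.

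For step (i), I would compute $(\bA y,y)_Y$ component-by-component on the dense subset of smooth elements. The transport terms $-U\Dx\phi$ and $-U\Dx\psi$ contribute nothing because $\Dx$ is skew on $\R^3_+$ (boundary at $z=0$ is transversal to the $x$-direction and the functions decay at infinity); the symmetric self-pairings $(A\psi,\phi)-(A\phi,\psi)$ and $(\cA v,u)-(\cA u,v)$ vanish by self-adjointness of $A$ and $\cA$; and the coupling terms collapse via the trace identity $N^*A\psi=\gamma[\psi]$, giving $-(ANv,\psi)+(N^*A\psi,v)=-(v,\gamma[\psi])_\Omega+(\gamma[\psi],v)_\Omega=0$. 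To justify this on general $y\in\cD(\bA)$, I would introduce a Friedrichs-type regularization of $(\phi,\psi)$ in $\R^3_+$ that preserves the Neumann-trace coupling $\Dn\phi=-v_{\text{ext}}$, apply the formal identity on the smooth approximants, and pass to the limit using the explicit description of $\cD(\bA)$ from Lemma~\ref{domain}. This yields both dissipativity of $\bA$ and of $-\bA$.

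For step (ii), I would fix $\lambda>0$, write out $(\lambda I-\bA)y=f$ componentwise, and use the first and third equations to eliminate $\psi=(\lambda+U\Dx)\phi-f_1$ and $v=\lambda u-f_3$, arriving at the coupled stationary system
\begin{equation*}
[(\lambda+U\Dx)^2+A]\phi+\lambda A N u=F_{\text{fl}},\qquad (\lambda^2+\cA)u-N^*A(\lambda+U\Dx)\phi=F_{\text{pl}},
\end{equation*}
with appropriate Neumann coupling on $\{z=0\}$. The symbol of the flow operator $(\lambda+U\Dx)^2+A$ is $\lambda^2+2i\lambda U\xi_1-(U^2-1)\xi_1^2+\xi_2^2+\xi_3^2+\mu$, which is \emph{not} elliptic for $U>1$ since the real part changes sign with $|\xi_1|$. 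To circumvent this, I would introduce the associated bilinear form $a(\cdot,\cdot)$ on $V=H^1(\R^3_+)\times H_0^2(\Omega)$, written so that the cross-boundary terms generated by $ANu$ and $N^*A\phi$ enter only through the skew pairing $(\gamma[\phi],u)_\Omega-(\phi_x,\gamma)_\Omega$ (cancelling in the real part), and the coercive part is fed by the $\lambda^2$ term in both the flow and plate equations together with the uniformly positive part of the flow symbol. To keep the analysis rigorous despite the loss of ellipticity, I would regularize the flow equation with $+\epsilon\Dx^2$ (restoring uniform ellipticity), solve by Lax–Milgram on $V$, extract uniform bounds from the coercivity of $a$, and pass to the limit $\epsilon\to 0$; the limit $(\phi,u)$ then defines $\psi,v$ as above, and the domain characterization of Lemma~\ref{domain} (in particular $(U^2-1)\Dx^2\phi-(\Dy^2+\Dz^2)\phi\in L_2(\R^3_+)$) must be verified directly from the equation a posteriori.

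The main obstacle is precisely this last maximality step: setting up a bilinear form whose coercivity survives the supersonic sign in $(U^2-1)\Dx^2$ and showing that the Lax–Milgram solution lies in the delicate domain $\cD(\bA)$ of Lemma~\ref{domain}, rather than in some larger space. Once $\lambda I\mp\bA$ are surjective, Lumer–Phillips yields a contraction semigroup for both $\bA$ and $-\bA$, hence a $C_0$-group, and the identity $(\bA y,y)_Y=0$ upgrades it to an isometry group, completing the proof.
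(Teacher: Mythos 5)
Your skeleton (Lumer--Phillips via dissipativity of $\pm\bA$ plus maximality of $\lambda\mp\bA$) matches the paper, and the dissipativity computation on smooth elements (transport terms skew, $N^*A\psi=\gamma[\psi]$ collapsing the coupling) is the same. The real divergences are in how you make this rigorous, and one of them contains a genuine gap.

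First, for the passage from smooth approximants to arbitrary $y\in\cD(\bA)$, you invoke a ``Friedrichs-type regularization preserving the Neumann-trace coupling $\Dn\phi=-v_{\rm ext}$.'' As stated this is not a construction: a mollification of $(\phi,\psi)$ on $\R^3_+$ will not preserve a prescribed Neumann trace, and because $v\in H^2_0(\Omega)$ is held fixed, you need approximants $(\phi^n,\psi^n)$ that \emph{exactly} satisfy $\Dn\phi^n=v_{\rm ext}$ for all $n$ while having $\phi^n\in H^2$, $\psi^n\in H^1$, with $(\bA y^n,y^n)_Y\to(\bA y,y)_Y$. The paper does this concretely (Lemma~\ref{l:ap}): writing $-U\phi_x+\psi-r\phi=h\in H^1$ with a free parameter $r\neq 0$ leads, on Fourier side, to the symbol $-(U^2-1)\omega^2+iUr\omega+|k|^2+\mu$, whose \emph{modulus} is bounded below by $c_0(1+\omega^2+|k|^2)^{1/2}$ for suitable $r$ --- even though the real part changes sign. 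This produces $H^{s+1}$ regularity for $H^s$ data and hence the desired approximants. Your proposal is silent on why a trace-preserving smoothing exists; that is a gap you should fill, e.g.\ by adopting the $r$-parameter device.

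Second, and more seriously, your maximality argument does not close. After eliminating $\psi,v$ you propose a Lax--Milgram form on $H^1(\R^3_+)\times H^2_0(\Omega)$ for $(\phi,u)$, regularize the flow operator with $+\epsilon\Dx^2$, and claim uniform bounds from ``the coercivity of $a$.'' But testing the reduced flow equation $[(\lambda+U\Dx)^2+A]\phi$ against $\phi$ and integrating by parts gives
\begin{equation*}
\lambda^2\|\phi\|^2+(1-U^2)\|\Dx\phi\|^2+\|\Dy\phi\|^2+\|\Dz\phi\|^2+\mu\|\phi\|^2+\mbox{(coupling)},
\end{equation*}
and for $U>1$ the term $(1-U^2)\|\Dx\phi\|^2$ is negative and of the same order as $\|\Dx\phi\|^2$. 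The $\lambda^2$ and $\mu$ contributions only control $\|\phi\|_{L_2}$, not the offending $H^1$ seminorm in $x$. With the $\epsilon\Dx^2$ regularizer the resulting coercivity constant in the $\Dx$ direction is $O(\epsilon)$; the Lax--Milgram bound on $\|\Dx\phi_\epsilon\|$ blows up as $\epsilon\to 0$, and the weak limit need not lie in $H^1(\R^3_+)$ at all. So the a posteriori verification of membership in $\cD(\bA)$ that you defer is not the issue --- you lose the state space one step earlier. The paper avoids this by never reducing to a second-order system: it tests the full first-order resolvent system $(\lambda-\bA)V=F$ componentwise (against $A\tilde\phi$, $\tilde\psi$, $\cA\tilde u$, $\tilde v$), observes that \emph{all} cross terms and transport terms cancel so that $a(V,V)=\lambda\|V\|_Y^2$ exactly, and runs a finite-dimensional Galerkin scheme with the a priori bound $\|V_N\|_Y\le\lambda^{-1}\|F\|_Y$. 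No coercivity beyond this algebraic identity is needed, and the weak limit lies in $Y$ by construction; the distributional form of the equations then places it in $\cD(\bA)$. I would recommend replacing your maximality step with this first-order Galerkin argument, or else finding a genuinely $\epsilon$-uniform estimate on $\|\Dx\phi_\epsilon\|$ (which the plate coupling does not supply).
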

Our calculations for the proof requires some approximation of the domain
$\cD(\bA)$ as a preliminary step.
\subsubsection{Domain Approximation}
We would like to build a family of approximants which  allows us to justify the formal calculus
occuring  in the subsequent  dissipativity and maximality considerations.
Below we concentrate on the more difficult supersonic case $U>1$.
\par
Let us take arbitrary $(\phi, \psi; u, v)$ in $\cD(\bA)$. Then
$$-U \phi_x + \psi  \in H^1(\realsthree_+) , ~~ -U \Dx \psi   - A (\phi+ Nv )     \in L_2(\realsthree_+). $$
Since we also have that $\phi\in H^1(\realsthree_+)$,
 there exist
$h \in H^1 (\realsthree_+)$ and $g \in L_2(\realsthree_+)$  (depending on the element in the domain) such that
\begin{equation}\label{h-g}
    -U \phi_x + \psi -  r \phi   =h \in H^1(\realsthree_+),~~~  -U \Dx \psi   - A(\phi+ Nv )   = g \in L_2(\realsthree_+)
\end{equation}
for every  $r\in\R$.
From the first relation:
$$ \Dx \psi = U \Dx^2 \phi + r \Dx \phi + h_x . $$
Substituting into the second yields
$$ -  U^2 \Dx^2  \phi - U r \Dx \phi - U h_x  - A(\phi +Nv )    = g. $$
This is equivalent to
\begin{equation}\label{wave3}
(U^2 -1) \Dx^2 \phi +U r \phi_x  - (\Delta_{y,z}+\mu)\phi =-g - U h_x = f  \in
L_2(\realsthree_+)
 \end{equation}
 with the boundary conditions $\Dn \phi = v_{ext}$, where $v\in H^2_0(\Om)$.
 By the trace theorem there exists $\eta \in H^{7/2}(\realsthree_+)$ such that
  $\Dn \eta = v_{ext}$. Therefore we can represent
  \[
  \phi=\phi_*+\eta,
  \]
 where $\phi_*\in  H^1(\realsthree_+)$ is a (variational) solution to
 the problem
   \begin{align}\label{wave3*}
 &   (U^2 -1) \Dx^2 \phi_* +U r \phi_{*x}  - (\Delta_{y,z}+\mu)\phi_* = f_*  \in
L_2(\realsthree_+), \\
&\Dn \phi_* = 0,\notag
   \end{align}
 and
 \[
 f_*=f -(U^2 -1) \Dx^2 \eta -U r \eta_x  + (\Delta_{y,z}+\mu)\eta.
 \]
Due to zero Neumann conditions the problem in \eqref{wave3*} can be extended
to the same equation in   $\R^3$.
Therefore
we can apply  Fourier transform in all variables. This gives us
$(x\leftrightarrow \om,~~ (y,z)\leftrightarrow k\in\R^2):$
\[
(-c_U\om^2+  i r U \om +|k|^2+\mu)\widehat\phi_*= \widehat f_*\in L_2,
\]
where $c_U=(U^2 -1)$.
Since\footnote{In the subsonic case ($c_U<0$) the equation in \eqref{wave3*}
is elliptic and  we can obtain better estimate. }
\[
|-c_U\om^2+ i U r \om +|k|^2+\mu|^2 =r^2U^2 \om^2 +(|k|^2+\mu-c_U\om^2)^2\ge c_0(|k|^2+\om^2+1)
\]
with some $c_0>0$ for appropriate $r=r(c_U,\mu)>0$,
the formula above leads to the solutions $\phi_*$ in $H^1(\realsthree_+)$ for
{\em every} $f_*\in L_2(\realsthree_+)$.
Moreover if $f_*\in H^s(\realsthree_+)$
is such that its even (in $z$) extension on $\R^3$ has the same smoothness,
then $\phi_*\in H^{1+s}(\realsthree_+)$ for every $s\ge 0$; hence for  $s<3/2$
we can define a continuous affine map
  $$
 \tau:H^s(\realsthree_+) \to H^{s+1}(\realsthree_+), ~~\mbox{where}~~  \tau(f)=\phi,
 $$
 where $\phi$ solves \eqref{wave3} with the Neumann boundary condition
 $\Dn \phi = v_{ext}\in H^2(\R^2)$.
\par
Thus, in order to find an approximate domain $\mathcal{D}_n$ it suffices to solve
(\ref{wave3}) with the right hand side in $H^1(\R^3_+)$.
Hence, we are looking for $\phi^n \in H^2(\R^3_+)$ such that
\begin{equation}\label{waven}
(U^2 -1) \Dx^2 \phi^n - \Delta_{y,z} \phi^n +\mu \phi^n +U r \phi_x^n = -g^n - U h^n_x  \in H^1(\R^3_+),
 \end{equation}
 and $\Dn \phi^n = v_{ext}$ for all $n$, and
where $h^n \rightarrow h $ in  $H^1(\R^3_+) $ and $g^n \rightarrow g $ in
$L_2(\R^3_+)$. Here $h$ and $g$ are given by  \eqref{h-g}.
\par
Solving  equation (\ref{waven})  with right hand side $ -g^n - Uh^n_x  \in H^1(\R^3_+)  $  gives solution
\[
\phi^n \in H^2(\R^3_+),~~ \phi^n_x \in H^1(\R^3_+).
\]
We then define $\psi^n \equiv U \phi^n_x +r\phi^n + h^n\in H^1(\R^3_+)$.
Then
$$
U \psi^n_x + A( \phi^n+Nv) = U^2 \phi^n_{xx}+ U h^n_x + A (\phi^n+Nv)+rU\phi_x^n =
 -g^n \rightarrow g = U \psi_x + (-\Delta +\mu) \phi
 $$
as desired. Additionally, since $\tau$ is affine and bounded, and each $\phi^n$
has the same boundary condition, we can conclude  that $$||\phi^n-\phi||_{H^1(\realsthree_+)}
=
 ||\tau(g^n + Uh^n_x) -\tau(g + Uh_x) ||_{H^1(\realsthree_+)} \le ||\phi^n-\phi+U(h^n_x-h_x)||_{L_2(\realsthree_+)} \to 0.$$ We may proceed similarly on $\psi^n = U\phi_x^n+r\phi^n+h^n$.
Thus, we have obtained
\begin{lemma}[{\bf Domain Approximation}]\label{l:ap}
For any  $y = (\phi,\psi;u,v) \in \cD(\bA) $ there exist  approximants $\phi^n \in H^2(\R^3_+) , \psi^n \in H^1(\R^3_+) $ such that
$y^n = (\phi^n, \psi^n; u, v) \in \cD(\bA) $ and $y^n \rightarrow y $ in $Y$. Moreover
$$ U \psi^n_x + A( \phi^n+Nv)   \rightarrow   U \psi_x + A (\phi +Nv), ~in~ L_2(\R^3_+ ), $$
$$ \psi^n -U \phi^n_x \rightarrow \psi - U \phi_x , ~in~ H^1(\R^3_+).
$$
As a consequence
$$
(\bA y^n, y^n )_Y \rightarrow (\bA y,y )_Y ~~\mbox{for all~~ $y \in \cD(\bA)$.}
$$
\end{lemma}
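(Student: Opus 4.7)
The plan is to use the affine solver $\tau$ built via Fourier--Laplace in \eqref{wave3*} to regularize $y$ at the level of its defining data. Given $y=(\phi,\psi;u,v)\in\cD(\bA)$ and a fixed $r=r(U,\mu)>0$ as in the preceding discussion, read off
\[
h := \psi - U\Dx\phi - r\phi \in H^1(\R^3_+), \qquad g := -U\Dx\psi - A(\phi+Nv) \in L_2(\R^3_+),
\]
so that $\phi$ is essentially $\tau(-g - Uh_x)$ plus the Neumann lift carrying $v_{ext}$. I would pick approximating sequences by mollifying the even-in-$z$ extensions: $h^n\in H^2(\R^3_+)$ with $h^n\to h$ in $H^1(\R^3_+)$ and $g^n\in H^1(\R^3_+)$ with $g^n\to g$ in $L_2(\R^3_+)$, chosen so that the extensions remain in the relevant Sobolev class on all of $\R^3$ so that $\tau$ produces the expected gain of one derivative.

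Next, define $\phi^n := \tau(-g^n - Uh^n_x) \in H^2(\R^3_+)$ and $\psi^n := U\Dx\phi^n + r\phi^n + h^n \in H^1(\R^3_+)$. The construction preserves $\Dn\phi^n = v_{ext}$ and keeps $u,v$ fixed. Membership $y^n = (\phi^n,\psi^n;u,v) \in \cD(\bA)$ is then verified clause by clause in \eqref{dom-bA}: the identity $-U\Dx\phi^n + \psi^n = r\phi^n + h^n$ lies in $H^1(\R^3_+)$ by construction, and differentiating \eqref{waven} yields $-U\Dx\psi^n - A(\phi^n + Nv) = -g^n \in L_2(\R^3_+)$; the remaining conditions on $u$ and $v$ are inherited from $y$. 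Convergence $y^n \to y$ in $Y$ follows from continuity of the affine map $\tau:L_2(\R^3_+)\to H^1(\R^3_+)$: since $-g^n - Uh^n_x \to -g - Uh_x$ in $L_2$, we obtain $\phi^n \to \phi$ in $H^1(\R^3_+)$, and consequently $\psi^n\to\psi$ in $L_2(\R^3_+)$.

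The two stronger convergences stated in the lemma are then immediate: $U\Dx\psi^n + A(\phi^n + Nv) = -g^n \to -g = U\Dx\psi + A(\phi + Nv)$ in $L_2(\R^3_+)$, and $\psi^n - U\Dx\phi^n = r\phi^n + h^n \to r\phi + h = \psi - U\Dx\phi$ in $H^1(\R^3_+)$ because both $\phi^n\to\phi$ and $h^n\to h$ in $H^1$. To conclude $(\bA y^n, y^n)_Y \to (\bA y, y)_Y$ I would expand the $Y$-inner product summand by summand and pair each term with the convergence of appropriate strength, using these two displayed convergences in place of any direct manipulation of the singular objects $U\Dx\psi + A(\phi + Nv)$ or $\psi - U\Dx\phi$. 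The main obstacle is the last step: the plate summand formally contains $N^*A\psi = \gamma[\psi]$ paired against $v\in H_0^2(\Omega)$, and since $\gamma[\psi]$ is only defined distributionally, passing to the limit requires reshuffling this pairing via Green's identity into an integral involving $\gamma[\phi]$ paired with $\Dx v$ together with the $H^1$-convergent combination $\psi - U\Dx\phi$, at which point the clamped boundary conditions on $v$ eliminate spurious boundary contributions and the convergence is justified.
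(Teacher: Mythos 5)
Your construction is the same as the paper's: you read off the same $h$ and $g$ from membership in $\cD(\bA)$, regularize them, feed them into the affine solver $\tau$ built from the Fourier--Laplace analysis of the stationary problem, and recover $\psi^n$ from $\phi^n$ and $h^n$, which yields exactly the two displayed convergences. The clause-by-clause domain verification and the treatment of the trace pairing in the final step are natural elaborations of points the paper leaves implicit; for the last one it is perhaps a bit quicker to note that $\gamma[\psi^n]=\gamma[\psi^n-U\Dx\phi^n]+U\Dx\gamma[\phi^n]\to\gamma[\psi]$ in $H^{-1/2}(\R^2)$ directly from the two displayed convergences, so the pairing with $v\in H_0^2(\Omega)$ passes to the limit by duality, but your route via Green's identity on the $\gamma[\phi^n]$-term is equivalent.
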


\subsubsection{Dissipativity}
The above approximation Lemma  allows us to perform calculations on smooth functions.
\par
Let $y^n\in \cD(\bA)$ be the sequence of approximants as in  Lemma \ref{l:ap}.
First, we perform the dissipativity calculation on these approximants (which allows us to move $A^{1/2}$ freely on flow terms $\phi^n$ and $\psi^n$):
\begin{align*}
(\bA y^n,y^n)_Y
= &\begin{pmatrix}\left[\begin{array}{c} -U\partial_x \phi^n+\psi^n  \\-U\partial_x\psi^n-A(\phi^n+Nv)\\ v \\ -\cA u+N^*A\psi^n
\end{array}\right], \left[\begin{array}{c} \phi^n \\\psi^n\\ u\\ v
\end{array}\right]
\end{pmatrix}_{ \cD(A^{1/2})\times L_2(\Omega)\times \cD(\cA)^{1/2}\times L_2(\Omega)}\\[.25cm]
\nonumber
=&~(A^{1/2}(\psi^n-U\partial_x\phi^n),A^{1/2}\phi^n)-(U\partial_x\psi^n+A(\phi^n+Nv),\psi^n)\\[.25cm]
\nonumber
&~+<\cA^{1/2}v,\cA^{1/2}u>-<\cA u-N^*A\psi^n,v>\\[.25cm]
\nonumber
=&~-U(A^{1/2}\partial_x\phi^n,A^{1/2}\phi^n)-(U\partial_x\psi^n+ANv,\psi^n)+<N^*A\psi^n,v>.
\end{align*}
One can see that
\[
(A^{1/2}\partial_x\phi^n,A^{1/2}\phi^n)=\int_{\R^3_+}\nabla\partial_x\phi^n \cdot \nabla\phi^n=
  \frac12 \int_{\R^3_+}\partial_x|\nabla\phi^n|^2=0
\]
Similarly $(U\partial_x\psi^n,\psi^n)=0$.
Therefore
\[
(\bA y^n,y^n)_Y
= -<ANv,\psi^n>+<N^*A\psi^n,v>
= 0,
\]
 Furthermore, by the convergence result in Lemma \ref{l:ap}, we have that for all $y \in \cD(\bA)$ $$(\bA y, y)_Y=\lim_{n \to \infty}(\bA y^n,y^n)_Y= 0.$$
This gives that {\em both} operators $\bA$ and $-\bA$ are dissipative.
\subsubsection{Maximality}
 In this section we prove the maximality of the operators $\bA$
 and $-\bA$. For this it is sufficient   to show
 $ \mathscr{R}(\lambda-\bA) = Y$ for every $\la\in\R$, $\la\neq 0$,
 i.e. for a given $F = (\phi', \psi';  u', v')$, find a $ V \in \mathscr{D}(\bA)$ such that $(\lambda-\bA)V = F.$ Writing this as a system, we have \begin{align}\label{full-eq-lamb}
 \begin{cases}
 \lambda \phi + U \partial_x \phi - \psi &= \phi' \in \cD(A^{1/2}),  \\
 \lambda \psi + U \partial_x \psi + A(\phi + Nv)  & = \psi' \in L_2(\realsthree_+), \\
 \lambda u - v  &= u' \in \cD(\cA^{1/2}),\\
 \lambda v + \mathscr{A}u - N^*A\psi & = v' \in L_2(\Omega),
 \end{cases}
 \end{align}
 (recalling that $Nv$ is taken to mean $Nv_{ext}$ where $v_{ext}$ is the extension by zero outside of $\Omega$).
 \par

 In the space $Y$ we rewrite \eqref{full-eq-lamb} in the form
 \begin{equation}\label{a-form}
    a(V,\widetilde{V})=(F,\widetilde{V})_Y,
 \end{equation}
 where for
$ V = (\phi, \psi; u, v)$ and  $\widetilde{V} =(\tilde\phi, \tilde\psi;  \tilde u, \tilde v)$
 we denote
 \begin{align*}
   a(V,\widetilde{V}) =&
 (\lambda \phi + U \partial_x \phi - \psi, A\tilde\phi)_{\R^3_+}  \\
& +( \lambda \psi + U \partial_x \psi + A(\phi + Nv),\tilde\psi)_{\R^3_+}   \\
 & +(\lambda u - v, \cA \tilde u)_\Omega+
 ( \lambda v + \mathscr{A}u - N^*A\psi,\tilde v)_\Omega.
 \end{align*}
 Let $\{\eta_k\} \times \{e_k\}$ be a sufficiently smooth basis in $\cD(A^{1/2}) \times \cD(\cA^{1/2})$.
 We define an $N$-approximate solution to \eqref{a-form} as an element
 \[
 V_N\in Y_N\equiv\mbox{Span}\;\left\{ (\eta_k,\eta_l;e_m,e_n)\, :\; 1\le k,l,m,n\le N
 \right\}
 \]
 satisfying the relation
 \begin{equation}\label{a-form-N}
    a(V_N,\widetilde{V})=(F,\widetilde{V})_Y,~~~\forall~ \widetilde{V}\in Y_N.
 \end{equation}
 This can be written as a linear $4N\times 4N$ algebraic equation.
  Calculations on (smooth) elements $V$ from $Y_N$ gives
 \begin{align*}
   a(V, V) = \lambda \big\{ ||A^{1/2}\phi||^2 + ||\psi||^2+
   ||\cA^{1/2}u||+||v||^2\big\}
 \end{align*}
 This implies that for every $\lambda\neq0$ the matrix which corresponds
 to \eqref{a-form-N} is non-degenerate, and therefore there exists
 a unique approximate solution $V_N=(\phi^N, \psi^N; u^N, v^N)$. Moreover we have that
 \[
   a(V_N,V_N)=(F,V_N)_Y
 \]
 which implies the a priori estimate
 \[
  ||A^{1/2}\phi^N||_{\R^3_+} ^2 + ||\psi^N||_{\R^3_+} ^2+
   ||\cA^{1/2}u^N||_\Om^2+||v^N||^2_\Om\le \frac{1}{\lambda^2}\|F\|_Y^2
 \]
Thus $\{V_N\}$ is weakly compact in $Y$. This allows us  to
 make limit transition in
 \eqref{a-form-N} to obtain the equality
 \[
   a(V,\widetilde{V})=(F,\widetilde{V})_Y,~~~\forall\, \widetilde{V}\in Y_M,~~
   \forall\, M,
 \]
 for some $V\in Y$. Thus \eqref{full-eq-lamb} is satisfied
 in the sense distributions. This proves maximality of both operators
 $\bA$ and $-\bA$.
\par
Since both operators
 $\bA$ and $-\bA$ are maximal and dissipative, we can apply
\cite[Corollary 2.4.11. p.25]{cas-har98} and conclude that
the operator $\bA$ is skew-adjoint with respect to Y.
This completes the proof of Proposition~\ref{pr:op-bA}.
\par
 The fact that $\bA$ is skew-adjoint  simplifies  calculations later in the treatment.
 In what follows, we use $\cD(\bA)$ and $\cD(\bA^*)$ interchangeably.
 \begin{remark}
 To conclude this section, we mention that for $y_0 \in Y$ the $C_0$-group $e^{\bA t}$ generates  a mild solution $y(t)=e^{\bA t}y_0$ to the PDE problem given
 in \eqref{sys1*} without the boxed term (see also \eqref{sys2*} below with $k=0$).
We also note that adding a linearly bounded perturbation to the dynamics will not affect the generation of a $C_0$ group. In particular, the addition of internal damping for the plate of the form $k u_t,~k>0$ on the LHS of (\ref{plate}) does not affect generation and we can construct $C_0$-group $T_k(t)$ which corresponds to the problem
 \begin{equation}\label{sys2*}\begin{cases}
(\partial_t+U\partial_x)\phi =\psi& \text{ in } \realsthree_+ \times(0,T),
\\
(\partial_t+U\partial_x)\psi=\Delta \phi-\mu \phi&\text{ in } \realsthree_+\times (0,T),
\\
\Dn \phi = -u_t \cdot \mathbf{1}_{\Omega}(\xb) & \text{ on }
\realstwo_{\{(x,y)\}} \times (0,T),
\\
u_{tt}+k u_t+\Delta^2u=\gamma[\psi]&\text{ in }  \Om \times (0,T),\\
u=\Dn u = 0 & \text{ in } \pd\Om \times (0,T).\\
\end{cases}
\end{equation}
Moreover, this damping term $ku_t$ does not alter $\cD(\bA)$ or $\cD(\bA^*)$, and hence the analysis to follow
 concerning the full dynamics ($\bA+\bP$, and the addition of nonlinearity) is valid presence of interior damping.
We plan use this observation in future studies of this model.
\end{remark}
\subsection{Variation of Parameters and Perturbed Linear Dynamics}
We begin with some preparations.

\subsubsection{Preliminaries}

We would like to introduce a perturbation to the operator $\bA$ which will produce the non-monotone flow-structure problem above. For this we define an operator $\bP:Y \to \mathscr{R}(\bP)$ as follows:
\begin{equation}\label{op-P}
\bP\begin{pmatrix}\phi\\\psi\\u\\v \end{pmatrix}= \bP_{\#}[u]\equiv\begin{pmatrix}0\\-UAN\partial_x u\\0\\0\end{pmatrix}
\end{equation}
Specifically, the problem in (\ref{sys1*}) has the abstract Cauchy formulation:
\begin{equation*}
y_t = (\bA +\bP)y, ~y(0)=y_0,
\end{equation*}
where $y_0 \in Y$ will produce semigroup (mild) solutions to the corresponding integral equation, and $y_0 \in \cD(\bA)$ will produce classical solutions. To find solutions to this problem, we will consider a fixed point argument, which necessitates interpreting and solving the following inhomogeneous problem,  and then producing the corresponding estimate on the solution:
\begin{equation} \label{inhomcauchy}
y_t = \bA y +\bP_\# \overline{u}, ~t>0, ~y(0)=y_0,
\end{equation}
for a given $\overline{u}$. To do so, we must understand how $\bP$ acts on $Y$
(and thus $\bP_\#$ on $H_0^2(\Om)$).
\par
To motivate the following discussion, consider for $y \in Y$ and $z= (\overline\phi, \overline\psi;  \overline u, \overline v)$ the formal calculus (with $Y$ as the pivot space)
\begin{align}\label{p-y-z}
(\bP y, z)_Y =& (\bP_{\#}[u],z)_Y
= -U(AN\partial_x u,\overline \psi)
=-U<\partial_x u, \gamma[\overline \psi]>.
\end{align}
Hence, interpreting the operator $\bP$ (via duality) is contingent upon the ability to make sense of $\gamma[\overline \psi]$, which \textit{can} be done if $\gamma[\overline \psi] \in H^{-1/2}(\Omega)$. In what follows, we show that the trace estimate on $\psi$ for mild solutions of (\ref{inhomcauchy}) allows us to justify the formal energy method (multiplication of (\ref{inhomcauchy}) by the solution $y$) in order to perform a fixed point argument.
\par
To truly get to the heart of this matter, we must interpret the following variation of parameters statement for $\overline{u} \in C(\R_+; H^2_0(\Om))$ (which will ultimately be the solution to (\ref{inhomcauchy})):
\begin{equation}\label{solution1}
y(t)=e^{\bA t}y_0 + \int_0^t e^{\bA(t-s)}\bP_{\#} [\overline{u}(s)] ds.
\end{equation}
To do so, we make use of the work in \cite{redbook} and write (with some $\la\in\R$, $\la\neq 0$):
\begin{equation}\label{solution2}
y(t)=e^{\bA t}y_0 + (\lambda  -\bA)\int_0^t e^{\bA(t-s)}(\lambda  - \bA)^{-1}\bP_{\#} [\overline{u}(s)]  ds,
\end{equation}
 initially interpreting this solution as an element of $[\cD(\bA^*)]'=[\cD(\bA)]'$, i.e., by considering the solution $y(t)$ in (\ref{solution2}) above acting on an element of $\cD(\bA^*)$.
 \subsubsection{Abstract Semigroup Convolution}
 At this point we cast the discussion of the perturbation $\bP$ (acting outside of $Y$) into the context of abstract boundary control. By doing this we simplify and distill our exposition, and moreover, provide a context for further boundary control considerations. For this discussion, we select and cite some results from \cite[pp.645-653]{redbook} which will be used in this section.
 \par
 Let  $X$ and $\cU$ be  reflexive Banach spaces. We assume that
 \begin{enumerate}
\item[(C1)] $A$ is a linear operator which generates a strongly continuous semigroup $e^{At}$ on $X$.
\item[(C2)] $B$ is a linear continuous operator from $\cU$ to $[\cD(A^*)]'$ (duality with respect to the pivot space $X$), or equivalently, $(\lambda - A)^{-1}B \in \mathscr{L}(\cU,X)$ for all $\lambda \in \rho(A)$.
 \end{enumerate}
  For fixed $0<T<\infty$ and $u\in L_1(0,T;\cU)$ we  define  the convolution operator
  $$
  (Lu)(t) \equiv \int_0^t e^{A(t-s)}Bu(s) ~ds,~~0 \le t \le T,
  $$
  corresponding to the mild solution
  $$
  x(t) =e^{At}x_0+(Lu)(t),~~0 \le t \le T,
  $$
  of the abstract inhomogeneous equation
  $$
  x_t = Ax + Bu \in [\cD(A^*)]',~x(0)=x_0,
  $$
  with the input function $Bu(t)$.

\begin{theorem}[{\bf Inhomogeneous Abstract Equations}]\label{dualityequiv}
Let $X$ and $\cU$ be reflexive Banach spaces and
the  conditions in $(C1)$ and $(C2)$ be in force.
Then
\begin{enumerate}
\item The semigroup $e^{At}$ can be extended to the space $[\cD(A^*)]'$.
 \item
 $L$ is continuous from $L_p(0,T;X)$ to $C(0,T; [\cD(A^*)]')$ for every $p \in [1,\infty]$.
\item If $u \in C^1(0,T;X)$, then $Lu \in C(0,T;X)$.
 \item The condition
\begin{enumerate}
\item[(C3)] There exists a constant $C_T>0$ such that
\begin{equation*}
\int_0^T ||B^*e^{A^*t}x^*||^2_{\cU^*} dt \le C_T||x^*||^2_{X^*},
~~\forall\, x^*\in \cD(A^*)\subset X^*,
\end{equation*}
\end{enumerate}
  is equivalent to the regularity property
 $$L:~L_2(0,T;\cU) \to C(0,T;X)~~ \text{is continuous,}
 $$ i.e., there exists a constant $k_T>0$ such that
 \begin{equation}\label{conv-reg}
 ||Lu||_{C(0,T;X)} \le k_T ||u||_{L_2(0,T;\cU)}.
 \end{equation}
 \item Lastly, assume additionally that $A$ generates a strongly continuous group $e^{At}$ (e.g., if $A$ is
 skew-adjoint) and suppose that $L:L_2(0,T;\cU) \to L_2(0,T;X)$ is continuous. Then $(C3)$ is satisfied and thus we have the estimate in \eqref{conv-reg} in this case.
\end{enumerate}
\end{theorem}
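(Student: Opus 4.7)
The five items are standard results in the abstract boundary-control theory referenced in the paper, and I would prove them sequentially, reusing earlier parts in the later ones. For (1), reflexivity of $X$ ensures that $e^{A^*t}$ is a $C_0$-semigroup on $X^*$ leaving $\cD(A^*)$ invariant, and taking its adjoint with respect to the pivot $X$ produces a strongly continuous extension of $e^{At}$ to $[\cD(A^*)]'$ via
\[
\langle e^{At}x, x^*\rangle := \langle x, e^{A^*t}x^*\rangle, \qquad x \in [\cD(A^*)]',\ x^* \in \cD(A^*).
\]
For (2), (C2) yields $B \in \mathscr{L}(\cU,[\cD(A^*)]')$; combined with (1), the integrand $s\mapsto e^{A(t-s)}Bu(s)$ is Bochner-integrable into $[\cD(A^*)]'$ with norm bounded by $M_T\|u(s)\|_\cU$, and the desired $L_p\to C$ continuity follows from H\"older together with strong continuity of the extended semigroup. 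For (3), use the resolvent shift: since $(\lambda-A)^{-1}B\in\mathscr{L}(\cU,X)$ by (C2), integration by parts in $s$ produces
\[
Lu(t) = (\lambda-A)^{-1}Bu(t) - e^{At}(\lambda-A)^{-1}Bu(0) + \int_0^t e^{A(t-s)}(\lambda-A)^{-1}B\bigl[\lambda u(s)-u'(s)\bigr]ds,
\]
in which every term lies in $X$ and is continuous in $t$ whenever $u\in C^1(0,T;\cU)$.

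The heart of the theorem is (4), which I would handle by classical duality. Fix $t\in(0,T]$ and view $L_t:u\mapsto Lu(t)$ as a linear map from $L_2(0,t;\cU)$ into $[\cD(A^*)]'$. Pairing the convolution against $x^*\in\cD(A^*)$ gives
\[
\langle Lu(t),x^*\rangle_{X} = \int_0^t \langle u(s), B^*e^{A^*(t-s)}x^*\rangle_{\cU,\cU^*}\,ds,
\]
so the formal adjoint is $L_t^*\,x^* = B^*e^{A^*(t-\cdot)}x^*$ regarded as a function on $(0,t)$. Thus (C3) is precisely the statement that $L_t^*:X^*\to L_2(0,t;\cU^*)$ is bounded with norm at most $\sqrt{C_T}$, which by reflexivity is equivalent to $L_t:L_2(0,t;\cU)\to X$ being bounded with the same norm, \emph{uniformly} in $t\in[0,T]$. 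This gives $\|Lu(t)\|_X\le\sqrt{C_T}\|u\|_{L_2(0,T;\cU)}$ pointwise in $t$; to upgrade to $C(0,T;X)$ I would invoke (3) for $u\in C^1(0,T;\cU)$ (where $Lu$ is genuinely $X$-continuous) and then extend by density to $L_2(0,T;\cU)$ using this uniform pointwise bound. The converse direction is immediate: \eqref{conv-reg} together with the duality pairing above returns (C3) by taking the supremum over $\|x^*\|_{X^*}\le 1$.

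For (5), the group hypothesis promotes $L_2\to L_2$ continuity to (C3) through the semigroup identity
\[
Lu(t) = e^{A(t-r)}Lu(r) + \int_r^t e^{A(t-s)}Bu(s)ds, \qquad 0\le r\le t,
\]
which for a group can be solved for $Lu(r)$ as $Lu(r)=e^{-A(t-r)}\bigl[Lu(t)-L[\chi_{(r,t)}u](t)\bigr]$, with $\|e^{-A(t-r)}\|$ uniformly bounded on $[0,T]$. Squaring, integrating over $t\in(r,r+\delta)$ for a fixed small $\delta$, and using $L_2\to L_2$ boundedness of $L$ on the second term yields a pointwise bound $\|Lu(r)\|_X\le C\|u\|_{L_2(0,T;\cU)}$ independent of $r$; combined with (3) and density this gives $L:L_2(0,T;\cU)\to C(0,T;X)$, whereupon (4) supplies (C3). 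The main obstacle I anticipate is precisely this last implication together with the continuity-in-$t$ passage inside (4): the equivalence in (4) is the technical crux, and for (5) the subtlety is that a naive dualization $L_2\to L_2\Rightarrow$ weighted (C3) produces spurious $(T-s)$-weights, which the translation-invariance argument above is needed to eliminate.
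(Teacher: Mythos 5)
The paper does not prove this theorem: it is stated immediately after the sentence ``we select and cite some results from \cite[pp.645-653]{redbook},'' so the result is imported verbatim from Lasiecka--Triggiani's monograph. There is thus no ``paper proof'' to compare against, and what you are offering is, in effect, a reconstruction of the argument in the cited reference.

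As a reconstruction, the plan is sound. Items (1)--(3) are standard: (1) follows by dualizing $e^{A^*t}$ across the Gelfand triple $\cD(A^*)\subset X^*\subset X^*$ (reflexivity of $X$ is exactly what guarantees $A^*$ densely defined and generating); (2) is Bochner integrability plus the uniform bound $\sup_{[0,T]}\|e^{At}\|_{\mathscr{L}([\cD(A^*)]')}<\infty$; (3) is the integration-by-parts identity you display, and you correctly read the hypothesis as $u\in C^1(0,T;\cU)$ even though the paper's statement misprints it as $C^1(0,T;X)$. For (4), the identification of the formal adjoint $L_t^*x^*=B^*e^{A^*(t-\cdot)}x^*$ and the observation that the change of variable $\tau=t-s$ turns (C3) into a bound on $\|L_t^*\|_{\mathscr{L}(X^*,L_2(0,t;\cU^*))}$ \emph{uniformly in $t\le T$} is exactly the right point; the density-plus-uniform-bound passage to continuity in $t$ then closes the forward implication, and the converse is dualization. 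For (5), your diagnosis that naive dualization of $L_2\to L_2$ boundedness only yields a weighted estimate, and that the group structure is needed to obtain the translation-invariant bound, is correct, and the window-averaging identity $Lu(r)=e^{-A(t-r)}[Lu(t)-Lv(t)]$ with $v=\chi_{(r,r+\delta)}u$ is the right device. One small gap: integrating $t$ over $(r,r+\delta)$ only works directly for $r\le T-\delta$, and you should say a word about $r$ near $T$. The cleanest fix is to observe that once (C3) is established on $[0,T-\delta]$, it extends to $[0,T]$ by writing $\int_0^T=\int_0^{T-\delta}+\int_{T-\delta}^T$, shifting the second piece with the bounded operator $e^{A^*(T-\delta)}$, and applying the $[0,\delta]$-version of (C3); alternatively, average over $(r-\delta,r)$ instead for $r$ close to $T$. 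With that remark supplied, the proposal is a correct and essentially self-contained proof of the cited theorem.
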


\subsubsection{Application of the Abstract Scheme}

We now introduce the auxiliary space which will be needed in the proof of the next lemma: $$
Z \equiv \left\{y =\begin{pmatrix} \phi \\ \psi \\ u \\ v \end{pmatrix}\in Y :
-U\pd_x \phi+\psi \in H^{1}(\R^3_+) \right\}
$$
endowed with the norm
\[
\|y\|_Z=\|y\|_Y+\|-U\pd_x \phi+\psi\|_{H^{1}(\R^3_+)}.
\]
One can see that $Z$ is dense in $Y$.
We also note that by Lemma \ref{domain}, $\cD(\bA) \subset Z$
and thus   $Z' \subset [\cD(\bA)]'$ with continuous embedding.
\begin{lemma}\label{le:P-op}
The operator $\bP_{\#}$ given by \eqref{op-P}
 is a bounded linear mapping  from $H^2_0(\Om)$ into $Z'$.
 Moreover,  the following estimates are in force:
 \begin{equation}\label{y-to-z1}
   ||\bP_{\#} [u]||_{Z'}\le C_U \|u\|_{H^2(\Om)}, ~~~\forall\, u\in H^2_0(\Om),
 \end{equation}
 and also (with  $\la\in\R$, $\la\neq 0$):
  \begin{equation}\label{y-to-z2}
   ||(\la-\bA)^{-1}\bP_{\#}[ u]||_{Y}\le C_{U,\la} \|u\|_{H^2(\Om)}, ~~~\forall\, u\in H^2_0(\Om).
 \end{equation}
 In the latter case we understand
$(\la-\bA)^{-1}\,  : [\cD(\bA)]'\mapsto Y$  as
the inverse to  the operator $\la-\bA$ which is extended to a mapping  from
$Y$ to  $[\cD(\bA)]'$. We also have that \eqref{y-to-z1} and \eqref{y-to-z2}
imply that $\bP$ maps $Y$ into $Z'$ and
 \begin{equation*}
   ||\bP y ||_{Z'}\le C \|y\|_{Y}  ~~\mbox{and}~~
||(\la-\bA)^{-1}\bP y||_{Y}\le C \|y\|_{Y},~~ \forall\, y\in Y.
 \end{equation*}
\end{lemma}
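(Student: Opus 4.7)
The plan is to interpret $\bP_{\#}[u]$ directly as an element of $Z'$ by means of the duality pairing suggested by the formal identity \eqref{p-y-z}. For any $z=(\overline\phi,\overline\psi;\overline u,\overline v)\in Z$, I would define
\[
\langle \bP_{\#}[u],z\rangle_{Z'\times Z} \;\equiv\; -U\,\langle \partial_x u,\gamma[\overline\psi]\rangle,
\]
where $\partial_x u$ is understood as its zero extension outside $\Omega$. To justify this definition, the first task is to show that $\gamma[\overline\psi]$ genuinely exists as an element of $H^{-1/2}(\R^2)$ whenever $z\in Z$, even though $\overline\psi$ is only in $L_2(\R^3_+)$. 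This is the main conceptual obstacle, and it is exactly the reason we introduced $Z$: by the definition of that space there exists $h\in H^1(\R^3_+)$ with
\[
\overline\psi \;=\; U\partial_x\overline\phi+h,
\]
so that, applying the trace separately to each summand,
\[
\gamma[\overline\psi] \;=\; U\,\partial_x\gamma[\overline\phi]+\gamma[h].
\]
Since $\overline\phi\in H^1(\R^3_+)$ gives $\gamma[\overline\phi]\in H^{1/2}(\R^2)$ and hence $\partial_x\gamma[\overline\phi]\in H^{-1/2}(\R^2)$, while $h\in H^1(\R^3_+)$ gives $\gamma[h]\in H^{1/2}(\R^2)\hookrightarrow H^{-1/2}(\R^2)$, both with continuity in the respective norms, we obtain
\[
\|\gamma[\overline\psi]\|_{H^{-1/2}(\R^2)} \;\le\; C\bigl(\|\overline\phi\|_{H^1(\R^3_+)}+\|h\|_{H^1(\R^3_+)}\bigr)\;\le\; C\,\|z\|_Z.
\]

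Next, I would note that since $u\in H^2_0(\Omega)$, the first derivative $\partial_x u$ lies in $H^1_0(\Omega)$, and so its extension by zero to $\R^2$ lies in $H^1(\R^2)\hookrightarrow H^{1/2}(\R^2)$ with $\|\partial_x u\|_{H^{1/2}(\R^2)}\le C\|u\|_{H^2(\Omega)}$. Combining this with the trace bound via the $H^{1/2}$--$H^{-1/2}$ duality pairing on $\R^2$ yields
\[
\bigl|\langle \bP_{\#}[u],z\rangle\bigr| \;\le\; C_U\,\|u\|_{H^2(\Omega)}\,\|z\|_Z,
\]
which is precisely \eqref{y-to-z1}. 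A short density/consistency check should confirm that when $u$ is smooth enough to make $AN\partial_x u$ meaningful in the standard sense, the new definition agrees with the formal one in \eqref{p-y-z}.

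For the estimate \eqref{y-to-z2}, I would invoke the skew-adjointness $\bA^*=-\bA$ established in Proposition~\ref{pr:op-bA}. This allows $\bA$ to be extended as a bounded operator $Y\to[\cD(\bA)]'$ via $\langle\widetilde{\bA}y,\Phi\rangle=-(y,\bA\Phi)_Y$ for $\Phi\in\cD(\bA)$, and for nonzero real $\lambda$ the extended $\lambda-\widetilde{\bA}$ is a topological isomorphism from $Y$ onto $[\cD(\bA)]'$, with inverse denoted $(\lambda-\bA)^{-1}\colon[\cD(\bA)]'\to Y$. Because $\cD(\bA)\subset Z$ (from Lemma~\ref{domain}), we have the continuous embedding $Z'\subset[\cD(\bA)]'$, so $\bP_{\#}[u]\in[\cD(\bA)]'$ and \eqref{y-to-z1} gives
\[
\|(\lambda-\bA)^{-1}\bP_{\#}[u]\|_Y \;\le\; C_\lambda\,\|\bP_{\#}[u]\|_{[\cD(\bA)]'}\;\le\; C_\lambda\,\|\bP_{\#}[u]\|_{Z'}\;\le\; C_{U,\lambda}\,\|u\|_{H^2(\Omega)},
\]
which is \eqref{y-to-z2}. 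Finally, the two displayed bounds for $\bP y$ follow immediately from the observation that $\bP y=\bP_{\#}[u]$ depends only on the plate displacement component of $y\in Y$, together with the continuous embedding $H^2_0(\Omega)\hookrightarrow Y_{pl}$. The crux of the argument, as indicated, is the trace identification in Step 1, where the auxiliary space $Z$ is tailored precisely to supply the missing $H^{-1/2}$-regularity of $\gamma[\overline\psi]$ needed to close the duality pairing.
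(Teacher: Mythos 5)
Your proof is correct and follows essentially the same route as the paper's: you define $\bP_\#[u]$ via the duality pairing $-U\langle\partial_x u,\gamma[\overline\psi]\rangle$, obtain $\gamma[\overline\psi]\in H^{-1/2}(\R^2)$ by splitting $\overline\psi$ into the $H^1$ piece $\overline\psi-U\partial_x\overline\phi$ plus the tangential derivative $U\partial_x\gamma[\overline\phi]$, and then conclude \eqref{y-to-z2} from \eqref{y-to-z1} together with $Z'\hookrightarrow[\cD(\bA)]'$ and the boundedness of $(\la-\bA)^{-1}$ on the extended space. The only cosmetic caveat is the phrase ``applying the trace separately to each summand,'' since $\partial_x\overline\phi\in L_2(\R^3_+)$ has no trace by itself; the correct reading (which the paper writes explicitly) is that $\partial_x$ is tangential so it commutes with $\gamma$ acting on $\overline\phi\in H^1$, giving $\partial_x\gamma[\overline\phi]\in H^{-1/2}(\R^2)$.
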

\begin{proof}
 For $u\in H^2_0(\Om)$ and $\overline{y}=(\overline{\phi},\overline{\psi};\overline{u},\overline{v}) \in Z$,
 from \eqref{p-y-z} we
have
\[
|(\bP_{\#}[ u], \overline y)_Y| =~U |(AN\partial_x  u, \overline \psi)|
= U |<\partial_x u, \gamma[\overline\psi]>|.
\]
Since $\ga[\overline\psi] = \ga[-U\pd_x \overline\phi+\overline\psi]+
U\pd_x\ga[\overline\phi]$, we have from the trace
theorem  that
\begin{equation*}
    ||\gamma[\overline\psi]||_{H^{-1/2}(\R^2)}\le C \left[||(-U\pd_x \overline\phi+\overline\psi)||_{H^{1}(\R_+^3)} +\|\overline
\phi||_{H^{1}(\R^3_+)}\right]\le C\|\overline y\|_{Z} .
\end{equation*}
Therefore
\[
|(\bP_{\#} [u], \overline y)_Y|\le
 C(U)  ||\pd_x u||_{H^{1/2}(\Omega)}||\gamma[\overline\psi]||_{H^{-1/2}(\R^2)}
\le  ~ C(U)  || u ||_{H^{2}(\Omega)} \| \overline y\|_{Z}.
\]
Thus
\[
||\bP_{\#} [u]||_{Z'} = \sup\big\{
|(\bP_{\#} [u], \overline y)_Y|\, : \overline y \in Z,~||\overline y||_Z =1\big\}
\le  C(U)  || u ||_{H^{2}(\Omega)},
\]
which implies \eqref{y-to-z1}.
 The relation in \eqref{y-to-z2} follows from  \eqref{y-to-z1} and
the boundedness of the operator
$(\la-\bA)^{-1}\, : [\cD(\bA)]'\mapsto Y$.
\end{proof}
We may now consider mild solutions to the problem given in (\ref{inhomcauchy}).
Applying general results on $C_0$-semigroups (see \cite{Pazy}) we arrive at the following assertion.
\begin{proposition}\label{pr:mild}
Let $\overline{u} \in C^1([0,T];H^{2}_0(\Omega))$ and $y_0\in Y$. Then $y(t)$
given by \eqref{solution1} belongs to  $C([0,T];Y)$ and
is a strong solution to \eqref{inhomcauchy} in $[\cD(\bA)]'$, i.e.
in addition we have that
\[
y\in  C^1((0,T);[\cD(\bA)]')
\]
and  \eqref{inhomcauchy} holds in $[\cD(\bA)]'$ for each $t\in (0,T)$.
\end{proposition}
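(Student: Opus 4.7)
My plan follows the variation-of-parameters formula (\ref{solution2}) and exploits the key observation of Lemma \ref{le:P-op}: although $\bP_{\#}$ maps $H_0^2(\Omega)$ only into the dual space $Z' \subset [\cD(\bA)]'$, the resolvent-composed operator $(\la-\bA)^{-1}\bP_{\#}$ is bounded $H_0^2(\Omega) \to Y$ by estimate (\ref{y-to-z2}). Setting
\[
g(s) \equiv (\la-\bA)^{-1}\bP_{\#}[\overline{u}(s)],
\]
the assumption $\overline{u} \in C^1([0,T]; H_0^2(\Omega))$ then gives $g \in C^1([0,T]; Y)$---a smooth convolution datum taking values inside the state space.

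Next, I apply the standard $C_0$-semigroup regularity result (see, e.g., \cite[Cor.~4.2.5]{Pazy}): since $\bA$ generates a $C_0$-group on $Y$ (Proposition \ref{pr:op-bA}) and $g \in C^1([0,T]; Y)$, the convolution $w(t) \equiv \int_0^t e^{\bA(t-s)} g(s)\,ds$ satisfies $w(t) \in \cD(\bA)$ for every $t \in [0,T]$, belongs to $C^1([0,T]; Y)$, and solves $w'(t) = \bA w(t) + g(t)$ classically in $Y$. Rewriting (\ref{solution2}) as $y(t) = e^{\bA t}y_0 + (\la-\bA)w(t)$ and using the identity
\[
(\la-\bA)w(t) = \la w(t) - w'(t) + g(t),
\]
I see that each term on the right is continuous in $Y$, so $y \in C([0,T]; Y)$ as required.

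For the strong-solution property in $[\cD(\bA)]'$, I invoke the skew-adjointness of $\bA$ (Proposition \ref{pr:op-bA}), which yields a bounded dual extension $\widetilde\bA: Y \to [\cD(\bA)]'$. By Theorem \ref{dualityequiv}(1), $e^{\bA t}$ extends to a $C_0$-group on $[\cD(\bA)]'$ generated by $\widetilde\bA$, so $\frac{d}{dt}e^{\bA t}y_0 = \widetilde\bA e^{\bA t}y_0$ in $[\cD(\bA)]'$. Differentiating $y = e^{\bA t}y_0 + (\la-\bA)w$, using $(\la - \widetilde\bA)g(t) = \bP_{\#}[\overline u(t)]$ by the definition of $g$, and invoking the commutation $(\la-\bA)\bA w(t) = \widetilde\bA(\la-\bA)w(t)$ (legitimate because $w(t)\in \cD(\bA)$ and $\widetilde\bA$ agrees with $\bA$ on $\cD(\bA)$), the calculation telescopes:
\begin{align*}
y'(t) &= \widetilde\bA e^{\bA t}y_0 + (\la-\bA)\bigl[\bA w(t) + g(t)\bigr] \\
&= \widetilde\bA\bigl[e^{\bA t}y_0 + (\la-\bA)w(t)\bigr] + \bP_{\#}[\overline u(t)] = \widetilde\bA y(t) + \bP_{\#}[\overline u(t)]
\end{align*}
in $[\cD(\bA)]'$. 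Continuity of each summand---via $\widetilde\bA \in \mathscr{L}(Y, [\cD(\bA)]')$, continuity of $y$ in $Y$, and Lemma \ref{le:P-op}---then yields $y \in C^1((0,T); [\cD(\bA)]')$.

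The principal subtlety, rather than an essential difficulty, is bookkeeping of topologies at the differentiation step: the computation must take place in $[\cD(\bA)]'$, because for generic $y_0 \in Y$ neither $\frac{d}{dt} e^{\bA t}y_0$ nor $\bA y(t)$ is available inside $Y$, and the commutation $(\la-\bA)\bA w = \widetilde\bA(\la-\bA)w$---trivial on $\cD(\bA^2)$---must be re-interpreted through the dual extension $\widetilde\bA$. Once the skew-adjointness of $\bA$, the $C_0$-group extension to $[\cD(\bA)]'$, and the boundedness of $(\la-\bA)^{-1}: [\cD(\bA)]' \to Y$ supplied by Lemma \ref{le:P-op} are all in hand, the remaining manipulations are routine.
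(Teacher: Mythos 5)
Your proof is correct and takes essentially the same route as the paper: both rely on the boundedness of $(\lambda-\bA)^{-1}\bP_{\#}\colon H^2_0(\Omega)\to Y$ from Lemma~\ref{le:P-op} to turn the singular convolution into one with a $C^1([0,T];Y)$-valued integrand, and then invoke the standard inhomogeneous $C_0$-semigroup regularity from Pazy (the paper's ``integration by parts formula'' is precisely the expansion $(\lambda-\bA)w = \lambda w - w' + g$ you write down). You merely make explicit the final $[\cD(\bA)]'$-valued differentiation that the paper disposes of by citing \cite[Cor.~2.5, p.107]{Pazy}, so the content is the same.
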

\begin{proof} The result follows from the integration by parts formula
\begin{align*} (L\overline{u})(t) \equiv&~   \int_0^t e^{A (t-s) }  \bP_{\#} [\overline{u}(s)]ds = (\lambda - A  ) \int_0^t e^{A (t-s) }  (\lambda - A )^{-1} \bP_{\#}[\overline{u}(s)] ds\\\nonumber
 =&~ \lambda \int_0^t e^{A (t-s) }   (\lambda - A  )^{-1}\bP_{\#}[\overline{u}(s)] ds -  \int_0^t e^{A (t-s) }
(\lambda - A )^{-1} \bP_{\#}[\frac{d}{ds}\overline{u}(s)] ds\\\nonumber
& -e^{A t} (\lambda -A )^{-1} \bP_{\#}[\overline{u}(0)]
+(\lambda-A)^{-1}\bP_{\#}[\overline{u}(t)]\\\nonumber
\in& ~C(0,T;X),
 \end{align*}
 and by applying \cite[Corollary 2.5, p.107]{Pazy} to show that
$y$ is a strong solution in $[\cD(\bA)]'$.
\end{proof}
Proposition~\ref{pr:mild} implies that $y(t)$ satisfies
the variational relation
\begin{equation}\label{weak-eq}
    \pd_t(y(t),h)_Y=-(y(t), \bA h)_Y+ (\bP_{\#}[\overline{u}(t)],h)_Y,~~~\forall\, h\in \cD(\bA).
\end{equation}
In our context in application of Theorem~\ref{dualityequiv} we have that $X=Y$,
where $Y$ is Hilbert space given by
\eqref{space-Y}, $\cU=H^2_0(\Om)$ and $B=\bP_\#$ is defined by \eqref{op-P}
as an operator from  $\cU=H^2_0(\Om)$ into $Z'$ (see Lemma~\ref{le:P-op}).
One can see from \eqref{p-y-z} that the adjoint operator $\bP_\#^*\, : Z\mapsto H^{-2}(\Om)$ is given
\[
\bP^*_{\#}z=U\big[\partial_x N^*A \overline \psi\big]\big|_\Om = U\partial_x \gamma[\overline \psi]\big|_\Om,
 ~~\mbox{for}~~ \ds z = \begin{pmatrix} \overline \phi \\ \overline \psi \\ \overline u \\ \overline v\end{pmatrix}\in Z.
 \]
Condition $(C3)$ in Theorem~\ref{dualityequiv}(4) is then paraphrased by writing
\begin{equation*}
y\mapsto
 \bP^*_\#e^{\bA^*(T-t)}y\equiv \bP^*_\#e^{\bA t}y_T : ~\text{continuous}~ Y \to L_2\big(0,T;H^{-2}(\Omega)\big),
\end{equation*}
where $y_T=e^{\bA^*T}y=e^{-\bA T}y$ (we use here the fact that $\bA$ is a skew-adjoint generator).
\par
Let us denote by
$e^{\bA t}y_T\equiv  w_T(t) =(\phi(t),\psi(t); u(t), v(t))$
the solution of the linear problem in \eqref{eq-trankt-0} with initial data $y_T$.
Then from the trace estimate in (\ref{trace-reg-est-M}) we have that
 \begin{align*}
||\bP^*_\#e^{\bA t}y_T||^2_{L_2(0,T:H^{-2}(\Omega))} = &~ U\int_0^T ||\partial_x\ga[\psi(t)]||^2_{H^{-2}(\Omega)}dt \\ \nonumber
\le
& C \int_0^T ||\gamma[\psi(t)]||^2_{H^{-1/2}(\Omega)}  dt \le C_{T} \left(E_{fl}(0)+\int_0^T ||v(t)||^2_{L_2(\Omega)}\right)dt \\ \nonumber
\le &  C_{T}\| y_T\|_Y=C_{T}\| e^{-\bA T}y\|_Y  =C_{T}\|y\|_Y.
\end{align*}
In the last equality we also use that
 $ e^{\bA t}$ is a $C_0$-group of isometries.

Now we are fully in a position to use Theorem \ref{dualityequiv}
which leads to the following assertion.

\begin{theorem}[{\bf $L$ Regularity}]
Let $T>0$ be fixed, $y_0 \in Y$ and $\overline u \in C([0,T]; H_0^{2}(\Omega))$.
Then  the mild solution
 \begin{equation*}
 \ds y(t)=e^{\bA t}y_0+L[\overline u](t)\equiv
 e^{\bA t}y_0+\int_0^te^{\bA (t-s)}\bP_{\#}[\overline u(s)] ds
 \end{equation*}
 to problem \eqref{inhomcauchy} in $[\cD(\bA)]'$ belongs to the class $C([0,T]; Y)$
 and enjoys the estimate
\begin{align}\label{followest}
\max_{\tau \in [0,t]} ||y(\tau)||_Y \le&  ||y_0||_Y + k_T ||\overline u||_{L_2(0,t;H^{2}_0(\Omega))},~~~\forall\, t\in [0,T].
\end{align}
\end{theorem}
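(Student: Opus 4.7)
The strategy is to recognize this as a direct application of Theorem~\ref{dualityequiv}(4) under the identifications $X=Y$, $\cU=H^2_0(\Omega)$, $A=\bA$, and $B=\bP_\#$. Condition (C1) has already been verified in Proposition~\ref{pr:op-bA}: the operator $\bA$ generates a $C_0$-group of isometries on $Y$. Condition (C2) is precisely Lemma~\ref{le:P-op}: $\bP_\#$ is bounded from $H^2_0(\Omega)$ into $Z'\subset[\cD(\bA)]'$. The nontrivial input is condition (C3), and this is exactly the estimate
\[
\|\bP_\#^* e^{\bA t}y\|^2_{L_2(0,T;H^{-2}(\Omega))} \le C_T\|y\|_Y^2
\]
derived in the computation immediately preceding the theorem statement, where the flow trace regularity of Lemma~\ref{le:FTR} is applied to $w_T(t)=e^{\bA t}y_T$ with $y_T=e^{-\bA T}y$, and the isometry property of the group is invoked to pass from $\|y_T\|_Y$ back to $\|y\|_Y$.

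Having verified all three hypotheses, Theorem~\ref{dualityequiv}(4) yields that $L\, :L_2(0,T;H^2_0(\Omega))\to C([0,T];Y)$ is continuous together with the bound $\|L[\overline u]\|_{C([0,T];Y)}\le k_T\|\overline u\|_{L_2(0,T;H^2_0(\Omega))}$. Since $\overline u\in C([0,T];H^2_0(\Omega))\hookrightarrow L_2(0,T;H^2_0(\Omega))$, this applies in our setting. Noting that $L[\overline u](t)$ depends only on the restriction $\overline u|_{[0,t]}$ (apply the continuity result on each subinterval $[0,t]$, or extend $\overline u$ by zero past $t$), one refines the bound to $\|L[\overline u](t)\|_Y\le k_T\|\overline u\|_{L_2(0,t;H^2_0(\Omega))}$ for every $t\in[0,T]$. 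Combining this with the isometry identity $\|e^{\bA t}y_0\|_Y=\|y_0\|_Y$ valid for all $t$, and the triangle inequality, delivers \eqref{followest}. Continuity of $t\mapsto y(t)$ in $Y$ follows from continuity of both summands: $t\mapsto e^{\bA t}y_0\in C([0,T];Y)$ by the $C_0$-group property, and $t\mapsto L[\overline u](t)\in C([0,T];Y)$ by Theorem~\ref{dualityequiv}(4).

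The main substantive work behind this theorem is thus already absorbed into the earlier development: the hidden boundary regularity $\gamma[\psi]\in L_2(0,T;H^{-1/2}(\R^2))$ established in Lemma~\ref{le:FTR} is exactly what makes the operator $\bP_\#$, which genuinely acts outside of $Y$, admissible in the abstract boundary-control framework. Once this trace estimate is in hand, together with the skew-adjointness of $\bA$ (which allows us to convert the semigroup estimate on $e^{\bA t}$ into an estimate on $\bP_\#^* e^{\bA t}$ with no loss of constants), the remaining assertion is essentially a packaging of the abstract theory from \cite{redbook}. I do not anticipate any further serious obstacle at this stage; the true mathematical difficulty was concentrated in Lemma~\ref{le:FTR} (the microlocal trace estimate) and in the maximality argument for $\bA$, and the role of the present theorem is to translate those ingredients into a usable semigroup regularity statement that can drive the subsequent fixed point argument for $y_t=(\bA+\bP)y$.
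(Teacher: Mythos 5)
Your proof is correct and follows the paper's approach exactly: verify (C1)--(C3) from Theorem~\ref{dualityequiv} using Proposition~\ref{pr:op-bA}, Lemma~\ref{le:P-op}, and the trace estimate from Lemma~\ref{le:FTR} respectively, then invoke Theorem~\ref{dualityequiv}(4) to obtain continuity and the estimate \eqref{followest}. The only addition you make beyond what the paper explicitly spells out is the (correct) remark about localizing the bound to $[0,t]$, which the paper leaves implicit.
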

\begin{remark}
The discussion beginning at Theorem \ref{dualityequiv} and ending with the estimate in \eqref{followest} demonstrates how the perturbation $\bP$ acting outside of $Y$ is regularized when incorporated into the operator $L$; namely, the variation of parameters operator $L$ is a priori only continuous from $L_2(0,T;\cU)$ to $C(0,T;[\cD(\bA ^*)]')$. However, we have shown that the additional ``hidden'' regularity of the trace of $\psi$ for solutions to (\ref{sys1}) allows us to bootstrap $L$ to be continuous from $L_2(0,T;\cU)$ to $C(0,T;Y)$ (with corresponding estimate) via the abstract Theorem \ref{dualityequiv}. This result essentially justifies \textit{formal} energy methods on the equation (\ref{inhomcauchy}) in order to set up a fixed point argument (which will follow in the next section).
\par
For completeness, we also include a direct proof of an estimate of the type \eqref{followest}
 in the Appendix, independent of the abstract boundary control framework presented in Theorem~\ref{dualityequiv}.
\end{remark}

\subsection{Construction of a Generator}
Let  $\bX_t = C\big((0,t];Y\big).$
Now, take $\overline{y}=(\overline{\phi},\overline{\psi};\overline{u},\overline{v}) \in \bX_t$ and $y_0 \in Y$, and introduce the map $\cF: \overline{y} \to y$ given by
\begin{equation*}
y(t) = e^{\bA t}y_0+L[\overline u](t),
\end{equation*}
i.e. $y$ solves \begin{equation*}
y_t=\bA y+\bP_\# \overline u,
~~ y(0)=y_0,
\end{equation*}
in the generalized sense, where $\bP_\#$ is defined in \eqref{op-P}.
 It follows from (\ref{followest})
 that for $\overline y_1, ~\overline y_2 \in \bX_t$
 \begin{align*}
 \|\cF \overline y_1 - \cF \overline y_1\|_{\bX_t}\le & ~k_T ||\overline u_1 - \overline u_2||_{L_2(0,t;H^{2}_0(\Omega))} \\
 \le & ~k_T \sqrt{t}\max_{\tau \in [0,t]}|| \overline u_1 - \overline u_2||_{H^2(\Omega)}
 \le k_T \sqrt{t} ||\overline y_1 - \overline y_2 ||_{\bX_t}.
 \end{align*}
Hence there is $0<t_*<T$ and $q<1$ such that
\[
\|\cF \overline y_1 - \cF \overline y_2\|_{\bX_t}\le q \| \overline y_1 -  \overline y_2\|_{\bX_t}
\]
for every $t\in (0,t_*]$.
This implies that  on the interval $[0,t_*]$ the problem
\begin{equation*}
y_t=\bA y+\bP y, ~~t>0,~
~~ y(0)=y_0,
\end{equation*}
has a local in time unique (mild) solution defined now in $Y$. This above local solution
can be extended to a global solution in finitely many steps by linearity.
Thus there exists a unique function
$y=(\phi,\psi;u,v)\in C\big(\R_+;Y\big)$ such that
\begin{equation}\label{eq-fin}
y(t)=e^{\bA t}y_0+\int_0^te^{\bA(t-s)}\bP [y(s)]ds~~\mbox{ in }~ Y
~~\mbox{for all }~t>0.
\end{equation}
It also follows from the analysis above that
\[
\|y(t)\|_Y\le C_T \|y_0\|_Y,~~~ t\in [0,T],~~\forall\, T>0.
\]
Thus the problem \eqref{eq-fin} generates strongly continuous semigroup
$\widehat{T}(t)$ in $Y$.
Additionally, due to \eqref{weak-eq} we have
\begin{equation*}
    (y(t),h)_Y=(y_0,h)_Y+ \int_0^t\left[
    -(y(\tau), \bA h)_Y+ (\bP[y(\tau)],h)_Y
    \right]d\tau
    ,~~~\forall\, h\in \cD(\bA),
    ~~ t>0.
\end{equation*}
Using the same idea as in subsonic case \cite{jadea12} (which relies on Ball's Theorem \cite{ball} and
ideas presented in  \cite{desh}), we can conclude that
the generator $\widehat{\bA}$ of $\widehat{T}(t)$  has the form
\[
\widehat{\bA}z=\bA z+\bP z,~~z\in\cD(\widehat{\bA})=\left\{z\in Y\,:\; \bA z+\bP z\in Y\right\}
\]
(we note that  the sum $\bA z+\bP z$ is well-defined as an element
in $[\cD(\bA)]'$ for every $z\in Y$). Hence, the semigroup $e^{\widehat \bA t}y_0$ is a generalized solution for $y_0 \in Y$ (resp. a classical solution for $y_0 \in \cD(\widehat \bA)$) to (\ref{sys1*}) on $[0,T]$ for all $T>0$.
\begin{equation}\label{dom-bA-n}
\cD(\bA + \bP) \equiv \left\{ y \in Y\; \left| \begin{array}{l}
-U \Dx \phi + \psi \in H^1(\R^3_+),~\\ -U \Dx \psi  -A (\phi + N (v + U \Dx u )) \in L_2(\R^3_+) \\
v \in \cD(\cA^{1/2} )= H_0^2(\Omega),~
-\cA u + N^* A \psi \in L_2(\Omega) \end{array} \right. \right\}
\end{equation}
Now we can conclude the proof of Theorem \ref{th:lin} in the same way as in
\cite{jadea12} by considering bounded perturbation of generator of the
term
\[
C(y) = ( 0, \mu \phi; 0,0).
\]
Indeed, the
function $y(t)$ is a generalized solution corresponding to the generator $\bA + \bP$ with the  domain defined in (\ref{dom-bA-n}). This proves the first statement in Theorem
 \ref{th:lin}. As for the second statement  (regularity), the invariance of the domain $\cD(\bA + \bP ) $ under the flow
 implies that solutions originating in $Y_1$ will remain in  $C([0, T]; Y_1)$. After identifying $ \phi_t = \psi - U \Dx \phi $
 one translates the membership in the domain into membership in $Y_1$.  Elliptic regularity applied to biharmonic operator yields  the precise regularity  results  defining {\it strong } solutions.
 The proof of Theorem \ref{th:lin} is thus completed.

\subsection{Nonlinear Semigroup and Completion of \\the Proof of Theorem \ref{th:nonlin}}
To prove Theorem \ref{th:nonlin} we also use the same idea
as in \cite{jadea12}. As an intermediate step we obtain the following theorem.
\begin{theorem}\label{th:3.9}
Let $\mathscr{F}(y)=(0,F_1(\phi);0, F_2(u))$ where
 \[
  F_1:H^1(\realsthree_+) \to L_2(\realsthree_+)~~\mbox{and}~~ F_2:  H^2_0(\Omega) \to L_2(\Omega)
 ~~\mbox{ are  locally Lipschitz}
 \]
 in the sense that every $R>0$ there exists $c_R>0$ such that
 \[
 \|F_1(\phi)-F_1(\phi^*)\|_{\R_+^3}\le c_R \|\phi-\phi^*\|_{1,\R_+^3}
 ~~\mbox{and}~~  \|F_2(u)-F_1(u^*)\|_{\Om}\le c_R \|u-u^*\|_{2,\Om}
 \]
 for all $\phi,\phi^*\in H^1(\realsthree_+)$  and $u, u^*\in H^2_0(\Omega)$
 such that $\|\phi\|_{1,\R_+^3}, \|\phi^*\|_{1,\R_+^3}, \|u\|_{2,\Om}, \|u^*\|_{2,\Om}\le R$.
Then the equation
\begin{equation}\label{abstractode}
y_t=(\bA+\bP)y+\mathscr{F}(y), ~y(0)=y_0 \in Y
\end{equation}  has a unique local-in-time generalized solution $y(t)$
(which is also weak).
Moreover, for $y_0 \in \cD (\bA + \bP )$, the corresponding solution is strong.
\par
In both cases, when $t_{\text{max}}(y_0)< \infty$, we have that $||y(t)||_Y  \to \infty$ as $t \nearrow t_{\text{max}}(y_0)$.
\end{theorem}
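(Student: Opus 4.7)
The plan is to reformulate (\ref{abstractode}) as a nonlinear Duhamel integral equation against the linear semigroup $\widehat{T}(t)=e^{\widehat{\bA}t}$ constructed in the previous subsection and then run a standard contraction argument in $Y$. Concretely, a generalized solution is a fixed point of
\[
\mathscr{G}[y](t) \equiv \widehat{T}(t) y_0 + \int_0^t \widehat{T}(t-s)\mathscr{F}(y(s))\,ds.
\]
The key observation that makes this work is that $\mathscr{F}$ maps $Y$ into $Y$ (since $F_1: H^1(\R^3_+)\to L_2(\R^3_+)$ and $F_2: H^2_0(\Omega)\to L_2(\Omega)$), so that, in contrast with the treatment of the unbounded perturbation $\bP$, the convolution is now a standard Bochner integral in $Y$ and no extended-space analysis is needed. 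All of the delicate boundary-trace work has already been absorbed into the generator $\widehat{\bA}=\bA+\bP$.

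First I would fix $R>0$ with $\|y_0\|_Y \le R$, set $M_T=\sup_{t\in[0,T]}\|\widehat{T}(t)\|_{\mathscr{L}(Y)}<\infty$, and let $c_{2M_TR}$ denote the local Lipschitz constant of $\mathscr{F}$ on the closed ball of radius $2M_TR$ in $Y$. Choosing $t_*>0$ small enough that $M_T t_* c_{2M_TR} \le 1/2$ and that the inhomogeneity $\widehat{T}(t)y_0 + \int_0^t \widehat{T}(t-s)\mathscr{F}(0)\,ds$ does not leave the ball of radius $2M_TR$, a routine computation shows that $\mathscr{G}$ is a strict contraction on this closed ball inside $\bX_{t_*}=C([0,t_*];Y)$. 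The Banach fixed-point theorem produces the unique local generalized solution. The standard continuation procedure then extends $y$ to a maximal interval $[0,t_{\max}(y_0))$; the blow-up alternative follows by contradiction, since if $\limsup_{t\nearrow t_{\max}}\|y(t)\|_Y=R^{*}<\infty$ the local existence time on balls of radius $2R^*$ is a fixed $\delta>0$ which permits restarting at some $t_{\max}-\delta/2$, contradicting maximality.

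For the weak/strong classification I would proceed as in \cite{jadea12}. To obtain the strong solution for $y_0\in\cD(\bA+\bP)$, I would verify that under Assumption~\ref{as} the map $\mathscr{F}$ is not only locally Lipschitz but continuously Fr\'echet differentiable from $Y$ into $Y$ (this is immediate for Kirchhoff and Berger, while for von Karman it follows from the sharp regularity of the Airy stress function cited in Section~1.8, which gives $F_2\in C^1(H_0^2(\Omega);L_2(\Omega))$). Classical semilinear theory (e.g., Pazy, Ch.~6) then promotes the fixed point to a classical $C^1([0,t_{\max});Y)$ solution lying in $\cD(\widehat{\bA})$. To show that generalized solutions are weak, I would use the integral formulation to pass to the limit in the variational identity satisfied by strong approximants obtained by regularizing $y_0$ in $\cD(\widehat{\bA})$; continuity of $y_0 \mapsto y(\cdot)$ in $C([0,t_*];Y)$, which is built into the fixed-point argument, supplies the required convergence.

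The main obstacle I anticipate is the verification of invariance of the somewhat atypical domain $\cD(\bA+\bP)$ defined in (\ref{dom-bA-n}) under the nonlinear flow: one must check that the convolution $\int_0^t\widehat{T}(t-s)\mathscr{F}(y(s))\,ds$ lies in $\cD(\widehat{\bA})$ when $y_0$ does. The delicacy stems from the fact that $\cD(\bA+\bP)$ mixes plate and flow traces through the condition $-U\Dx\psi - A(\phi+N(v+U\Dx u))\in L_2(\R^3_+)$, and the nonlinear forcing perturbs only the plate slot; one must therefore exploit the bootstrap on $u\in\cD(\mathscr{A}^{7/8})$ inherited from $\cD(\bA)$ together with $C^1$ regularity in time to keep $\mathscr{F}(y(\cdot))$ smooth enough for the classical differentiation-under-the-integral argument to go through. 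Once invariance is in place, elliptic regularity applied to the biharmonic operator together with the identification $\phi_t=\psi-U\Dx\phi$ yields the smoothness required by the definition of strong solution in Section~\ref{solutions}.
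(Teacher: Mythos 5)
Your fixed-point construction of the mild solution is exactly what the paper invokes: the paper's proof is a one-line citation of Pazy, Theorem~1.4 (p.~185), which is the locally Lipschitz semilinear existence theorem with the blow-up alternative, and your Duhamel/Banach argument is the proof of that theorem. The key observation you highlight --- that $\mathscr{F}$ maps $Y$ into $Y$ so no extended-space machinery is needed once $\widehat\bA=\bA+\bP$ has been built --- is correct and is precisely the point of Theorem~\ref{th:3.9}.

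Your treatment of the \emph{strong} solution, however, departs from both the hypotheses of the theorem and the paper's route, and the departure is not sound. Theorem~\ref{th:3.9} only assumes $F_1,F_2$ are \emph{locally Lipschitz}; you instead propose to show $\mathscr{F}\in C^1(Y;Y)$ and use Pazy's classical-solution theorem. That is a strictly stronger hypothesis, and it is not available in the generality of the statement. It is in fact false for the Kirchhoff case in Assumption~\ref{as}(1): there $f\in\operatorname{Lip}_{\mathrm{loc}}(\R)$ only, and the Nemytski operator of a Lipschitz-but-not-$C^1$ function is not Fr\'echet differentiable, so your claim that $C^1$ is ``immediate for Kirchhoff'' does not hold. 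The paper instead cites a localized version of Pazy, Theorem~1.6 (p.~189), which produces the strong solution for $y_0\in\cD(\bA+\bP)$ using only the locally Lipschitz property together with the \emph{reflexivity} of the state space $Y$ (a Hilbert space here); the standard mechanism is the weak-$*$ compactness/difference-quotient argument in $[\cD(\bA+\bP)]'$, not differentiation under the integral. Consequently your anticipated ``main obstacle'' (invariance of the mixed domain \eqref{dom-bA-n} via bootstrapping $C^1$-regularity of $\mathscr{F}(y(\cdot))$) is aimed at the wrong target: no smoothness of $\mathscr{F}$ beyond Lipschitz is required, and the domain invariance is delivered abstractly by the reflexive-space version of the theorem. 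Replacing the $C^1$ step by this reflexivity argument would make your proof agree with the paper's.
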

\begin{proof}
This is a direct application of Theorem 1.4~\cite[p.185]{Pazy}
and  localized version of Theorem 1.6~\cite[p.189]{Pazy}.
\end{proof}
 In order to guarantee global solutions,
one must have more information on the nature of the nonlinear term.
The following result  provides  relevant abstract  conditions imposed on nonlinear terms which can be verified:
\begin{theorem}\label{abstract}
We assume that $f$ is locally Lipschitz from
 $H^2_0(\Omega)$ into $L_2(\Om)$ and there exists
$C^1$-functional $\Pi(u)$ on  $H^2_0(\Omega)$ such that
$f$ is a Fr\'echet derivative of $\Pi(u)$,
$f(u)=\Pi'(u)$. Moreover we assume that  $\Pi(u)$
 is  locally bounded on  $H^2_0(\Omega)$, and there exist $\eta<1/2$
 and $C\ge 0$ such that
\begin{equation}\label{8.1.1c1}
 \eta \|\Delta u\|_\Om^2 +\Pi(u)+C \ge 0\;,\quad \forall\, u\in  H^2_0(\Omega)\;.
\end{equation}
Then the generalized solution in  (\ref{abstractode}) is global, i.e. $ t_{max} = \infty $.
\end{theorem}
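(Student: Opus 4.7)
The plan is to invoke the blow-up alternative in Theorem~\ref{th:3.9}: it suffices to establish an a priori bound $\|y(t)\|_Y \le \Phi(T,\|y_0\|_Y)$ on every compact interval $[0,T]$ on which the solution exists, since such a bound forces $t_{\max}(y_0)=+\infty$. I carry out the argument first for strong solutions (i.e.\ $y_0\in\cD(\bA+\bP)$), where the formal energy identity \eqref{energyrelation} is rigorous, and then transfer the bound to generalized solutions by density.

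For the a priori estimate, the starting point is
\begin{equation*}
    \cE(t) + U\int_0^t\langle u_x(s),\gamma[\psi(s)]\rangle\,ds = \cE(0).
\end{equation*}
Assumption \eqref{8.1.1c1} yields the lower bound
\begin{equation*}
    \cE(t) + C \;\ge\; \tfrac12\|u_t\|^2 + \bigl(\tfrac12-\eta\bigr)\|\Delta u\|^2 + \tfrac12\|\psi\|^2 + \tfrac12\|\nabla\phi\|^2,
\end{equation*}
whose right-hand side is equivalent to $\|y(t)\|_Y^2$ modulo $\|\phi(t)\|_{L_2(\R^3_+)}$. The latter is controlled by integrating $\phi_t=\psi-U\phi_x$ and using that $(\phi,\phi_x)_{L_2(\R^3_+)}=0$ for $\phi\in H^1(\R^3_+)$, which gives $\|\phi(t)\|_{L_2}\le\|\phi_0\|_{L_2}+\int_0^t\|\psi(s)\|_{L_2}\,ds$ and is ultimately absorbed by Gronwall. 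The upper bound $\cE(0)\le C(1+\|y_0\|_Y^{k})$ follows from local boundedness of $\Pi$.

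The main obstacle is controlling the ``leaking'' boundary term $U\int_0^t\langle u_x,\gamma[\psi]\rangle\,ds$, since $\psi$ has no classical trace. Here I invoke the hidden regularity of Lemma~\ref{le:FTR} applied with $\Dn\phi=-(u_t+Uu_x)\mathbf{1}_\Omega$:
\begin{equation*}
    \int_0^t\|\gamma[\psi](s)\|^2_{H^{-1/2}(\R^2)}\,ds \le C_t\Bigl(E_{fl}(0)+\int_0^t\|u_t+Uu_x\|^2_{L_2(\Omega)}\,ds\Bigr).
\end{equation*}
Since $u\in H^2_0(\Omega)$ implies $u_x\in H^1_0(\Omega)\hookrightarrow H^{1/2}_{00}(\Omega)=[H^{-1/2}(\Omega)]'$, the duality pairing is well defined. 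Cauchy--Schwarz and Young's inequality then yield
\begin{equation*}
    \Bigl|U\int_0^t\langle u_x,\gamma[\psi]\rangle\,ds\Bigr| \le \epsilon\int_0^t\|u_x\|^2_{H^{1/2}(\Omega)}\,ds + C_\epsilon\int_0^t\|\gamma[\psi]\|^2_{H^{-1/2}(\R^2)}\,ds,
\end{equation*}
with $\|u_x\|_{H^{1/2}}\le C\|\Delta u\|$ and $\|u_t+Uu_x\|^2\le C(\|u_t\|^2+\|\Delta u\|^2)$. Choosing $\epsilon$ small enough (using the strict inequality $\eta<1/2$ so the $\|\Delta u\|^2$ contribution can be absorbed into the lower bound of $\cE(t)$), I arrive at an inequality of the form
\begin{equation*}
    \|y(t)\|_Y^2 \le C_T(1+\|y_0\|_Y^{k}) + C_T\int_0^t\|y(s)\|_Y^2\,ds,\qquad t\in[0,T].
\end{equation*}

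Gronwall's lemma then bounds $\|y(t)\|_Y$ by a function of $t$ and $\|y_0\|_Y$ only, ruling out finite-time blow-up for strong solutions. For arbitrary $y_0\in Y$, I approximate by data $y_0^n\in\cD(\bA+\bP)$ (dense by the $C_0$-semigroup property of $\bA+\bP$), obtain strong solutions $y^n$ on $[0,T]$ with uniform-in-$n$ bounds, and pass to the limit via the continuous dependence on initial data inherent in the fixed-point construction of Theorem~\ref{th:3.9} together with the locally Lipschitz property of $f$. Since $T$ was arbitrary, $t_{\max}(y_0)=+\infty$, completing the proof. The essential technical point throughout is the use of the sharp trace estimate from Lemma~\ref{le:FTR} to render the indefinite boundary term meaningful and controllable.
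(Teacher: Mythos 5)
Your proof follows the paper's argument exactly: coercivity of $\cE(t)$ via \eqref{8.1.1c1}, the energy relation \eqref{energyrelation}, the hidden trace estimate of Lemma~\ref{le:FTR} to control the interactive boundary term, and Gronwall (your extra care about $\|\phi\|_{L_2(\R^3_+)}$ and the density argument for generalized data are reasonable elaborations of steps the paper leaves implicit). One small clarification: the parenthetical claim that $\epsilon$ is chosen small so the $\|\Delta u\|^2$ contribution can be ``absorbed into the lower bound of $\cE(t)$'' is misplaced---$\epsilon\int_0^t\|\Delta u(s)\|^2\,ds$ is a time integral and cannot be absorbed into the pointwise coercivity estimate at time $t$; it simply joins the Gronwall integrand, and the hypothesis $\eta<1/2$ has already been used where it is needed, namely to make $\cE(t)+C$ control $\|y(t)\|_Y^2$ from below.
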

\begin{proof}
The relation in \eqref{8.1.1c1} implies that
the full energy $\cE(t)$ defined in \eqref{energies} admits the estimates
\[
\cE(t)\ge c_0\left(  \|\psi\|_{\R^2_+}^2 +\|\phi\|_{\R^2_+}^2 +\|u_t\|_\Om^2 + \|\Delta u\|_\Om^2 \right) -c_1
\equiv c_0 \|y(t)\|_Y^2-c_1
\]
for some positive  $c_i$. Therefore
using the energy relation in \eqref{energyrelation} on the existence time interval
and the flow trace regularity (see Lemma~\ref{le:FTR}) we can conclude
that
\begin{align*}
c_0\|y(t)\|_Y^2\le & c_1+ \cE(0)+\int_0^t\|\Delta u(\tau)\|_\Om \|\psi(\tau)\|_{H^{-1/2}(\R^2_+)} d\tau
\\
\le & C(y_0)+C_T\int_0^t\|y(\tau)\|_Y^2 d\tau.
\end{align*}
Thus the solution $y(t)$ cannot blow up at finite time, i.e., $ t_{max} = \infty $.
\end{proof}

Thus, in order to complete the proof of the first part of  Theorem \ref{th:nonlin} one needs to verify  that the nonlinear forcing
$f(u)$  given in Assumption \ref{as} comply with the requirements of  Theorem \ref{abstract}.

\subsubsection{Verification of the Hypotheses}

We note  the examples of forcing terms described above satisfy
conditions of  Theorem \ref{abstract}.

{\it Step 1: Kirchhoff model}.
Indeed,
in the case of the {\it Kirchhoff model}, the  embeddings
$H^s(\Om)\subset L_{\infty}(\Om)$ for $s>3/2$
implies that
\begin{equation}\label{loc-lip-f}
\|f(u_1)-f(u_2)\|_\Om\le c_R \|u_1-u_2\|_{H^{2-\delta}(\Om)}
\end{equation}
for every $u_i\in H^2_0(\Om)$ with $\|u\|_{H^{2}(\Om)}\le R$.
The functional $\Pi(u)$ has the form
\[
\Pi(u)=\int_\Om F(u(x))dx~~\mbox{with}~~ F(s)=\int_0^s f(\xi)d\xi
\]
It follows from
\eqref{phi_condition}
that there exist $\ga<\lambda_1$ and $C\ge 0$ such that
$F(s)\ge -\ga s^2/2-C$ for all $s\in\R$.
This implies \eqref{8.1.1c1}.
\par
{\it Step 2: Von Karman model}. In the case  of the {\it von Karman model} the arguments are more subtle.
We rely on sharp regularity of
Airy stress function \cite{fhlt}  and also Corollary 1.4.5 in \cite{springer}.
\begin{equation*}
\|\Delta^{-2} [u,w]\|_{W^{2,\infty}(\Omega) } \leq C \|u\|_{2,\Omega}\|w\|_{2, \Omega}
\end{equation*}
where $\Delta^2 $ denotes biharmonic operator with zero clamped boundary conditions.
The above yields
\begin{equation*}
\|v(u)\|_{W^{2,\infty}(\Omega) } \leq C \|u\|^2_{2,\Omega}
\end{equation*}
which in turn implies
 that the Airy
stress function $v(u)$ defined in (\ref{airy-1}) satisfies the inequality
\begin{equation}\label{airy-lip}
\| [u_1,v(u_1)]-  [u_2,v(u_2)]\|_\Om\le
C(\|u_1\|_{2,\Om}^2+\|u\|_{2,\Om}\|_2^2)\| u_1-  u_2\|_{2,\Om}
\end{equation}
(see Corollary 1.4.5  in \cite{springer}).
Thus, $f(u)= -[u,v(u)+F_0]$ is locally Lipschitz on $H^2_0(\Om)$.
\par
The potential energy $\Pi$ has the form
\[
\Pi(u)=\frac14\int_\Om\left[ |v(u)|^2 -2([u,F_0]) u\right] dx
\]
and possesses the property in  \eqref{8.1.1c1},
see, e.g., Lemma 1.5.4  \cite[Chapter 1]{springer}.
It is worthwhile to note that the property \eqref{8.1.1c1} is related to the validity of the maximum principle for Monge Ampere equations.  For functions $u \in H^2(\Omega) $ one has
\begin{equation*}
\sup_{\Omega} u  \leq
\sup_{\partial \Omega}u  + \frac{{\rm diam } \Omega}{\sqrt{\pi}} ||[ u, u]||_{L_1(\Omega)}^{1/2}
\end{equation*}
Thus for $u\in H^2(\Omega) \cap H_0^1(\Omega) $  we have (see Lemma 1.5.5 in \cite{springer}):
\begin{equation*}
\max_{\Omega} |u(x)| \le \frac{{\rm diam} \Omega}{\sqrt{\pi}} ||[ u, u]||_{L_1(\Omega)}^{1/2}
\end{equation*}
The above uniqueness property is critical in proving \eqref{8.1.1c1} for any function $ u\in H^2(\Omega) \cap H_0^1(\Omega)$.
\par
{\it Step 3: Berger's model.} One can also see that the Berger model satisfies
\eqref{loc-lip-f} with $\delta=0$ and
 \eqref{8.1.1c1} holds;
for details see \cite[Chapter 4]{Chueshov} and \cite[Chapter 7]{ch-l}.

\subsubsection{Completion of the Proof of Theorem \ref{th:nonlin}}

 For the proof of Theorem \ref{th:nonlin}
it suffices to apply Theorem \ref{abstract} along with
the estimates stated above. These estimates assert that the  hypotheses of Theorem \ref{abstract}  have been verified for all three nonlinear models under consideration.
\par
Concerning strong solutions
we notice first that on the strength of the estimate (\ref{airy-lip}) the domain of $(\bA + \bP) + \cF $  in the von Karman case is the same
as the domain $\cD(\bA + \bP ) $. The same holds for  the other two models.
The local Lipschitz property of the nonlinear terms, along with global bounds on solutions,  allows us to claim the invariance of the domain of the nonlinear  flow.  Thus  for the initial data in $ Y_1$ one has that the solution $y = (\phi, \phi_t; u, u_t)  \in L_{\infty} ( 0, T; Y_1) $ (as in the argument of Theorem~\ref{th:3.9}
we refer to Theorem 1.6~\cite[p.189]{Pazy}).
 This implies
\begin{align*}
& y \in C([0, T]; Y ),~~ \phi_t \in  L_{\infty} ( 0, T; H^1(\R^3_{+})), ~~ u_t \in  L_{\infty} ( 0, T; H_0^2(\Omega)), \nonumber \\
& \phi_{tt} = -2 \phi_{xt} - U^2 \phi_{xx} + \Delta \phi \in L_{\infty} (0, T; L_2(\R^3_{+})),
 \nonumber \\
& u_{tt} = - \Delta^2 u + \gamma[ \phi_t  +U \phi_x]  - f(u)= - \Delta^2 u +U  \gamma[\phi_x]   + \gamma[\phi_t] - f(u)
\in L_{\infty} ( 0, T; L_2(\Omega)),
\nonumber \\
& \mbox{the above implies via elliptic theory and Sobolev's embeddings}
\nonumber \\
& \Delta^2 u \in  L_{\infty} ( 0, T; H^{-1/2}(\Omega))   \rightarrow u  \in L_{\infty} ( 0, T; H^{7/2}(\Omega)),  \nonumber \\
& (U^2-1)\phi_{xx} - \phi_{zz} -\phi_{yy} \in  L_{\infty} ( 0, T; L_2(\R^3_{+})),
~~  \phi_z|_{z=0} = u_t + U u_x \in  L_{\infty} ( 0, T; L_2(\R^2)).
\end{align*}
The above relations imply the regularity properties required from strong solutions.
\par
 The regularity postulated for strong solutions is sufficient in order to define variational forms describing the  solutions. The  existence and uniqueness  of weak solutions follow by viewing
generalized solutions as the strong  limits of   strong solutions.
 This, along with Lipschitz estimates satisfied by nonlinear forces, allows a
passage with the limit on strong solutions.
This completes the proof of Theorem \ref{th:nonlin}.

\medskip\par

In conclusion we note that
the well-posedness results presented in this treatment are a necessary first step in studying long-time behavior of solutions. This can be done without the addition of damping mechanisms (see \cite{chuey} and \cite[Remark 12.4.8]{springer}) or in the presence of control theoretic damping, e.g. boundary or interior dissipation (see \cite{springer,ACC}. In either case, the next step will be to show the existence of global attracting sets for the plate component of the model, and analyze their properties (i.e., compactness, dimensionality, and regularity).

\section{Acknowledgements}

The research conducted by Irena Lasiecka was supported by the grants NSF- DMS-0606682 and AFOSR-FA99550-9-1-0459.
Justin Webster was supported the Virginia Space Grant Consortium Graduate Research Fellowship, 2011-2012 and 2012-2013.

\section{Appendix}
\subsection{Direct Proof of Estimate \eqref{followest} for Fixed Point Statement}
Let  $\bar u \in C^2([0,T];H_0^{2}(\Omega))$ and
let $y(t)=(\phi(t),\psi(t);u(t),v(t))\in C([0,T];Y)$ be a
a mild solution to \eqref{inhomcauchy}.
This implies that $y(t)$ is a
  (distributional) solution
to problem
\begin{equation}\label{sol-distr}
\begin{cases}
(\partial_t+U\partial_x)\phi =\psi& \text{ in } \realsthree_+ \times(0,T),
\\
(\partial_t+U\partial_x)\psi=\Delta \phi -\mu\phi&\text{ in } \realsthree_+\times (0,T),
\\
\Dn \phi = -\big(\partial_tu +U\partial_x w)\cdot \mathbf{1}_{\Omega}(\xb)& \text{ on } \realstwo_{\{(x,y)\}} \times (0,T),
\\
u_{tt}+\Delta^2u=\gamma[\psi]&\text{ in } \Om \times (0,T).\\
u=\Dn u = 0 & \text{ in } \pd\Om \times (0,T).
\end{cases}
\end{equation}
It follows from the trace theorem that
there exists $\eta$ from the class $C^2([0,T];H^{2}(\R^3_+))$ such that
\[
\Dn \eta = -U[\partial_x \bar{u}]_{ext}~~ \text{ on } \realstwo_{\{(x,y)\}} \times (0,T).
\]
Let $\widetilde{\phi}=\phi-\eta$. Then it follows from \eqref{sol-distr}
that
$\widetilde{y}(t)=(\widetilde{\phi}(t),\psi(t);u(t),v(t))\in C([0,T];Y)$
solves (inhomogeneous) problem
\begin{equation}\label{sol-distr-nh}
\begin{cases}
(\partial_t+U\partial_x)\widetilde\phi
=\psi+f_1& \text{ in } \realsthree_+ \times(0,T),
\\
(\partial_t+U\partial_x)\psi=\Delta \widetilde\phi -\mu\widetilde\phi
 +f_2&\text{ in } \realsthree_+\times (0,T),
\\
\Dn\widetilde \phi = -\partial_tu \cdot \mathbf{1}_{\Omega}(\xb)& \text{ on } \realstwo_{\{(x,y)\}} \times (0,T),
\\
u_{tt}+\Delta^2u=\gamma[\psi]&\text{ in } \Om \times (0,T),\\
u=\Dn u = 0 & \text{ in } \pd\Om \times (0,T).
\end{cases}
\end{equation}
where
\[
f_1= -(\partial_t+U\partial_x)\eta,~~~f_2= (\Delta-\mu)\eta.
\]
Problem \eqref{sol-distr-nh} can be written in the form
\begin{equation}\label{absproblem-f}
\widetilde{y}_t=\bA \widetilde{y}+ F(t),~~~ \widetilde{y}(0)=\widetilde{y}_0,
\end{equation}
where $F=(f_1,f_2;0,0)\in C^1([0,T]; Y)$. Therefore
by Corollary 2.5\cite[p.107]{Pazy} for any $\widetilde{y}_0\in \cD(\bA)$
there exists a strong solution $\widetilde{y}$ to \eqref{absproblem-f} in $Y$. This solution possesses the properties
\[
\widetilde{y}\in  C((0,T);Y)\cap C^1((0,T);\cD(\bA)')
\]
and satisfies the relation
\begin{equation}\label{en-rel-tilde}
\|\widetilde{y}(t)\|_Y^2= \|\widetilde{y}(0)\|_Y^2+\int_0^t(F(\tau),\widetilde{y}(\tau))_Y^2 d\tau.
\end{equation}
Now we can return to the original variable
$y(t)=(\phi(t)\equiv\widetilde{\phi}(t)+\eta(t),\psi(t);u(t),v(t))$
and show that \eqref{en-rel-tilde} can be written in the following way
\begin{equation}\label{en-rel-main}
\|y(t)\|_Y^2= \|y(0)\|_Y^2 -2U\int_0^t(\bar{u}_x(\tau),\ga[\psi(\tau)])_{L_2(\Om)} d\tau,
\end{equation}
provided  $\bar{u}\in C^2([0,T];H_0^{2}(\Omega))$ and $\widetilde{y}_0\in \cD(\bA)$.
\par
The integral term in \eqref{en-rel-main} can be estimated as follows:
\begin{align}\label{fixest-prml}
\int_0^t |<\bar{u}_x, \gamma[\psi]>|
\le &~ c_0 \int_{0}^t\left[\|w\|^2_{H^2(\Om)}d\tau +||\gamma[\psi]||^2_{H^{-1/2}(\Omega)}\right]d\tau
\end{align}
One can see that
 the estimate in (\ref{trace-reg-est-M}) can be written
 with the constant $C_T$ which is uniform at any interval, i.e.
  in the form
\begin{equation*}
\int_0^t\|\gamma[\psi](\tau)\|^2_{H^{-1/2} (\R^2)}d\tau\le C_T\left(
E_{fl}(0)+
 \int_0^t\| \Dn \phi(\tau) \|^2d\tau\right)
\end{equation*}
which holds for every $t\in [0,T]$.
Since in our case $\Dn \phi = -\big(v +U\partial_x \bar{u})\cdot \mathbf{1}_{\Omega}(\xb)$, we have that
\begin{align*}
\int_0^t\|\gamma[\psi](\tau)\|^2_{H^{-1/2} (\R^2)}d\tau\le&~ C_T\left(
\|y_0\|_Y^2+
 \int_0^t\left[ \| v(\tau)\|^2 + \|\bar{u}(\tau)\|^2_{H^2(\Om)} \right]d\tau \right)
\end{align*}
for every $t\in [0,T]$.
Therefore \eqref{en-rel-main} and \eqref{fixest-prml}
yield
\[
\|y(t)\|^2_Y\le  C_T\left(\|y_0\|_Y+  \int_0^t \|\bar{u}(\tau)\|^2_{H^2(\Om)}d\tau +
\int_0^t \| y(\tau)\|^2_Y d\tau  \right)
\]
for every $t\in [0,T]$, where $y_0\in Y_1$ and
and $\bar u \in C^2([0,T];H_0^{2}(\Omega))$.
Now we can extend this inequality by continuity
for all $y_0\in Y$ and $w \in C([0,T];H_0^{2}(\Omega))$
to obtain \eqref{followest}.

\end{document}